\journal{}
\def\ps@pprintTitle{\let\@oddhead\@empty\let\@evenhead\@empty\def\@oddfoot{}\def\@evenfoot{}}\makeatother
\theoremstyle{plain}
\newtheorem{thm}{Theorem}[section]
\newtheorem{lem}[thm]{Lemma}
\newtheorem{cor}[thm]{Corollary}
\newtheorem{prop}[thm]{Proposition}
\theoremstyle{definition}
\newtheorem{defn}[thm]{Definition}
\theoremstyle{remark}
\newtheorem{rem}[thm]{Remark}
\newcommand{\Thm}[1]{Theorem~\ref{th:#1}}
\newcommand{\Prop}[1]{Proposition~\ref{prop:#1}}
\newcommand{\Lem}[1]{Lemma~\ref{lem:#1}}
\newcommand{\Cor}[1]{Corollary~\ref{cor:#1}}
\newcommand{\Defn}[1]{Definition~\ref{def:#1}}
\newcommand{\Rem}[1]{Remark~\ref{rem:#1}}
\newcommand{\Eq}[1]{\eqref{eq:#1}}
\numberwithin{equation}{section}\makeatletter
\def\rom#1{\mbox{\leavevmode\skip@\lastskip\unskip\/\ifdim\skip@=\z@\else\hskip\skip@\fi{\rm{#1}}}}
\renewcommand{\a}{\alpha}\renewcommand{\b}{\beta}
\newcommand{\gm}{\gamma}\newcommand{\dl}{\delta}
\newcommand{\eps}{\varepsilon}
\newcommand{\lm}{\lambda}
\newcommand{\sg}{\sigma}
\newcommand{\ph}{\varphi}
\newcommand{\Ph}{\Phi}
\newcommand{\D}{\mathbb{D}}
\newcommand{\N}{\mathbb{N}}
\newcommand{\R}{\mathbb{R}}
\newcommand{\cE}{\mathcal{E}}
\newcommand{\cF}{\mathcal{F}}
\newcommand{\fB}{\mathfrak{B}}
\newcommand{\fM}{\mathfrak{M}}
\newcommand{\sd}{\mathsf{d}}
\newcommand{\hH}{\mathscr{H}}
\newcommand{\FCb}{\mathcal{F}C_b}
\newcommand{\la}{\langle}
\newcommand{\ra}{\rangle}
\newcommand{\Dom}{\operatorname{Dom}}
\newcommand{\Cp}{\operatorname{Cap}\nolimits}
\newcommand{\BBG}{\overline{B}_G}
\newcommand{\BBH}{\overline{B}_H}
\DeclareMathOperator{\mint}{%
\mathchoice%
{
\ooalign{%
\ensuremath{%
\begin{picture}(8,8)
\thicklines
\put(1.25,3.0){\line(1,0){8}}
\end{picture}}%
\crcr
\hss\ensuremath{\displaystyle\int}\hss}
\hspace*{-4.25pt}
}
{
\ooalign{%
\ensuremath{%
\begin{picture}(8,8)
\put(2.0,2.75){\line(1,0){4}}
\end{picture}}%
\crcr
\hss\ensuremath{\textstyle\int}\hss}
\hspace*{-2.63pt}
}
{
\ooalign{%
\ensuremath{%
\begin{picture}(8,8)
\thinlines
\put(2.5,2.0){\line(1,0){3}}
\end{picture}}%
\crcr
\hss\ensuremath{\scriptstyle\int}\hss}
\hspace*{-3.20pt}
}
{
\ooalign{%
\ensuremath{%
\begin{picture}(8,8)
\put(2.98,1.5){\line(1,0){2}}
\end{picture}}%
\crcr
\hss\ensuremath{\scriptscriptstyle\int}\hss}
\hspace*{-3.0pt}
}
}
\begin{document}
\begin{frontmatter}



\title{Dirichlet spaces on $H$-convex sets in Wiener space}


\author{Masanori Hino\corref{cor1}}
\cortext[cor1]{Research partially supported by KAKENHI (21740094).}
\address{Graduate School of Informatics,
Kyoto University,
Kyoto 606-8501,
Japan
\bigskip

{\normalsize Dedicated to the memory of Professor Paul Malliavin}}

\begin{abstract}
We consider the $(1,2)$-Sobolev space $W^{1,2}(U)$ on subsets $U$ in an abstract Wiener space, which is regarded as a canonical Dirichlet space on $U$.
We prove that $W^{1,2}(U)$ has smooth cylindrical functions as a dense subset if $U$ is $H$-convex and $H$-open.
For the proof, the relations between $H$-notions and quasi-notions are also studied.
\end{abstract}

\begin{keyword}
Dirichlet space\sep convex set\sep Wiener space
\MSC[2010]31C25\sep 46E35\sep 28C20\sep 60J60
\end{keyword}

\end{frontmatter}


\section{Introduction}
In  Euclidean space, extension operators related to Sobolev spaces are useful tools.
Their existence is stated as follows: Given a domain $U$ of $\R^n$ with a sufficiently regular boundary, $p\ge1$, and $r\in\N$, there exists a bounded linear map $T\colon W^{r,p}(U)\to W^{r,p}(\R^n)$ such that $Tf=f$ on $U$ for all $f\in W^{r,p}(U)$.
Here, $W^{r,p}(X)$ denotes the Sobolev space on domain $X$, with differentiability index $r$ and integrability index $p$.
In particular, the above statement implies that $W^{r,p}(\R^n)|_U=W^{r,p}(U)$, where the left-hand side denotes a function space on $U$ that is defined by restricting the defining sets of the functions in $W^{r,p}(\R^n)$ to $U$. Hereafter, we use the standard notation described above.
Such properties can reduce many problems on $U$ to those on $\R^n$, which are often easier to resolve.

In this paper, we discuss a related problem in infinite-dimensional spaces.
To the best of the author's knowledge, there are no nontrivial examples that involve the existence of the extension operators described above: Some useful techniques such as covering arguments and a Whitney decomposition in Euclidean space are not directly available in infinite dimensions; this complicates the problem.
In this paper, we consider a reduced version of the problem as follows: Let $(E,H,\mu)$ be an abstract Wiener space (the definition of which is provided in Section~2) and $U$, a measurable subset of $E$ with positive $\mu$-measure.
Find sufficient conditions on $U$ such that 
\begin{equation}\label{eq:property}
\begin{array}{l}
W^{1,2}(E)|_U\mbox{ is dense in $W^{1,2}(U)$ in the topology induced by the}\\\mbox{Sobolev norm.}
\end{array}
\end{equation}
The well-definedness of the space $W^{1,2}(U)$ is explained in the next section. 
Here, we note that $W^{1,2}(U)$ is regarded as the domain of a canonical Dirichlet form on $L^2(U,\mu|_U)$, where $\mu|_U(\cdot):=\mu(\cdot\cap U)$.
Since the space $\FCb^1(E)$ of smooth cylindrical functions on $E$ is known to be dense in $W^{1,2}(E)$, \Eq{property} is equivalent to the following:
\begin{equation}\label{eq:property2}
\begin{array}{l}
\FCb^1(E)|_U\mbox{ is dense in $W^{1,2}(U)$ in the topology induced by the}\\\mbox{Sobolev norm.}
\end{array}
\end{equation}
The closure of $\FCb^1(E)|_U$ in $W^{1,2}(U)$ is often regarded as the minimal domain.
Therefore, the problem under consideration is to determine whether the canonical domain and the minimal domain coincide.
Even for this weaker property, few examples of non-smooth sets are known to satisfy it.
The following is the known result.
\begin{thm}[{\cite[Theorem~2.2]{Hi03}}]\label{th:before}
If $U$ is convex and has a nonempty interior, then \Eq{property} is true.
\end{thm}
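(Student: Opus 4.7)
The plan is to approximate any $f \in W^{1,2}(U)$ by functions in $\FCb^1(E)|_U$ through three stages: a convex contraction that moves the argument uniformly away from $\partial U$, a Gaussian mollification along finitely many Cameron--Martin directions, and a cylindrical conditional expectation. By the equivalence of \Eq{property} and \Eq{property2} noted in the excerpt, this suffices.

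First, I would fix an interior point of $U$ and, by translating, assume that $0 \in U$ and $r B_E \subseteq U$ for some $r>0$, where $B_E$ is the open unit ball of $E$. For $\lambda \in (0,1)$ let $T_\lambda x := \lambda x$. Convexity of $U$ together with $0 \in U$ gives $T_\lambda(U) \subseteq U$, and the identity $\lambda x + (1-\lambda)\eta \in U$ for every $x \in U$ and every $\eta \in r B_E$ yields $T_\lambda(U) + (1-\lambda) r B_E \subseteq U$. This is the crucial use of convexity: it provides a uniform $E$-buffer of width $(1-\lambda) r$ around each contracted point, within which one can perturb without leaving $U$.

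Second, set $f_\lambda(x) := f(\lambda x)$ on $U$, so formally $Df_\lambda(x) = \lambda(Df)(\lambda x)$ by the chain rule. I would establish $f_\lambda \to f$ in $W^{1,2}(U)$ as $\lambda \uparrow 1$ by first checking the claim on cylindrical $f$, where the pushforward $\mu|_U \circ T_\lambda^{-1}$, projected onto finitely many coordinates, is a finite-dimensional Gaussian with an explicit density converging to $1$ in $L^1$; the general case would follow by $L^2$-density together with an equi-integrability estimate controlled by $\|f\|_{W^{1,2}(U)}$. Third, for each fixed $\lambda < 1$, I would mollify $f_\lambda$ by an Ornstein--Uhlenbeck-type operator of small time acting only on a finite-dimensional subspace $H_n \subseteq H$, with step size chosen smaller than the buffer $(1-\lambda) r$ so that perturbed points remain in $U$. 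Conditioning on the cylindrical $\sigma$-algebra generated by an $E^*$-basis of $H_n$ then yields an approximant in $\FCb^1(E)|_U$; letting $n \to \infty$ and refining the mollification recovers $f_\lambda$ in $W^{1,2}(U)$ norm.

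The main obstacle is the convergence in the second step. Since the dilation $T_\lambda$ is not measure-preserving in infinite dimensions (indeed $\mu \circ T_\lambda^{-1}$ is singular with respect to $\mu$ for $\lambda \ne 1$), no direct change-of-variables is available, and the limit must be extracted through cylindrical reductions together with pointwise monotone/dominated control in $\lambda$. A further delicate point is that $Df$ takes values in $H$ while $T_\lambda$ acts on $E$, so transferring the convergence to the gradient term requires careful use of the continuous dense embedding $H \hookrightarrow E$, in order to write $Df_\lambda(x) = \lambda (Df)(\lambda x)$ rigorously as an element of $L^2(U,\mu|_U; H)$ rather than merely of $L^2(U,\mu|_U; E)$. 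Once this step is settled, the remaining mollification and cylindrical conditioning are standard, and the theorem follows by a diagonal argument in $\lambda \uparrow 1$ and $n \to \infty$.
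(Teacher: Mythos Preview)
Your second stage contains a genuine gap. You correctly note that $\mu$ and $\mu\circ T_\lambda^{-1}$ are mutually singular for $\lambda\ne 1$, but this is fatal rather than merely inconvenient: for a general $f\in W^{1,2}(U)$ the composition $f_\lambda(x)=f(\lambda x)$ is \emph{not well-defined} as an element of $L^2(U,\mu|_U)$. Since $f$ is determined only $\mu$-a.e., and singularity furnishes a $\mu$-null set $N$ with $\mu(\lambda^{-1}N)=1$, altering $f$ on $N$ changes $f_\lambda$ on a set of full measure. Your proposed remedy---prove $f_\lambda\to f$ first for cylindrical $f$ and then pass to general $f$ by $L^2$-density plus an ``equi-integrability'' bound---cannot work: such an argument would require $f\mapsto f_\lambda$ to extend to a bounded (or at least closable) operator on $L^2(U)$, and no such extension exists in the absence of a Radon--Nikodym density. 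There is no uniform estimate of $\|f_\lambda\|_{L^2(U)}$ in terms of $\|f\|_{W^{1,2}(U)}$ to be had.

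The paper explicitly flags this obstruction and explains that the proof in \cite{Hi03} proceeds instead by contracting only along a \emph{finite-dimensional} subspace $G\subset E^*$: writing $E=G^\perp\dotplus G$, one sets $T_\gamma z=P_G z+(1-\gamma)Q_G z+\gamma a$ for a suitable center $a\in G$. This alters only the finite-dimensional Gaussian factor $\mu_G$, so $\mu\circ T_\gamma^{-1}\ll\mu$ with locally bounded density, and $g\circ T_\gamma$ is a bona fide element of the relevant Sobolev space. Because such a contraction creates a buffer only in the $G$-directions, one must first localize $f$ by a partition of unity subordinate to finitely many $G$-cylinders and then treat each piece separately; this replaces your global dilation-then-mollify scheme with a finite-dimensional-contraction-per-slice scheme.
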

We may assume that $U$ is open in this theorem without loss of generality because the topological boundary of $U$ is a $\mu$-null set under these assumptions (see \Rem{1}~(ii)).

In this paper, we provide a refinement of \Thm{before}.
Although \Thm{before} has been proved within a more general framework~\cite{Hi03},  we consider only an abstract Wiener space in order to avoid inessential technical issues.
\Thm{before} is not satisfactory in that the assumptions involve the vector space structure and topological structure of $E$.
It is desirable to impose assumptions depending only on the structures of the Cameron--Martin space $H$.
Accordingly, we prove the following theorem.
Let $\fM(E)$ denote the completion of the Borel $\sg$-field $\fB(E)$ of $E$ by $\mu$.
\begin{thm}\label{th:main}
Suppose that $U\in\fM(E)$ with positive $\mu$-measure is $H$-convex and $H$-open. Then, \Eq{property} holds.
\end{thm}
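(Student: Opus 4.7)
My plan is to prove \Thm{main} by a cutoff-and-extend scheme that reduces the question to the approximation of $W^{1,2}(E)$ functions by $\FCb^1(E)$ functions, which is already known. The central tool is an ``$H$-distance to $U^c$'' function, whose properties translate the $H$-convex/$H$-open structure of $U$ into capacity-theoretically usable estimates; developing these properties amounts to the ``relations between $H$-notions and quasi-notions'' promised in the abstract.

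The preparatory work centers on
\begin{equation*}
\sg_U(x):=\sup\{r\ge 0:x+\BBH(0,r)\subset U\},\qquad x\in E,
\end{equation*}
with the convention $\sg_U=0$ if no positive $r$ qualifies. One directly checks that $\sg_U$ is $1$-Lipschitz along $H$-directions, strictly positive on $U$ (by $H$-openness), and vanishes on $E\setminus U$. The main capacity-theoretic statements to establish are that $\sg_U$ is $\mu$-measurable, admits a quasi-continuous version, and satisfies $\Cp(\{x\in U:\sg_U(x)<\rho\})\to 0$ as $\rho\downarrow 0$. I expect these to follow from finite-dimensional reductions along $H$-subspaces combined with standard properties of Gaussian capacity.

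Given $f\in W^{1,2}(U)$, I form the cutoff $\chi_\rho:=(\sg_U/\rho)\wedge 1$ and the product $f_\rho:=f\chi_\rho$. The $H$-convexity and $H$-openness of $U$ guarantee that $U\cap(x+H)$ is an open interval along each $H$-line through $x$, on which $\chi_\rho$ decays continuously to zero at both ends. Assuming adequate boundary control of $f$ (see below), the zero-extension of $f_\rho$ to $E$ is absolutely continuous along $H$-lines with $L^2$ derivative and hence lies in $W^{1,2}(E)$. Density of $\FCb^1(E)$ in $W^{1,2}(E)$ followed by restriction to $U$ then yields approximations of $f_\rho$ in $\FCb^1(E)|_U$. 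It remains to show $f_\rho\to f$ in $W^{1,2}(U)$ as $\rho\downarrow 0$: the principal term $\chi_\rho\nabla_H f\to\nabla_H f$ by dominated convergence, while $f\nabla_H\chi_\rho$ is supported in $\{\sg_U<\rho\}\cap U$ with $|\nabla_H\chi_\rho|_H\le 1/\rho$.

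The main obstacle is controlling the boundary term $\|f\nabla_H\chi_\rho\|_{L^2(U)}$: because of the $1/\rho$ factor, capacitary smallness of $\{\sg_U<\rho\}$ alone is insufficient. I expect to need a Hardy-type inequality of the form $\int_U|f|^2\sg_U^{-2}\,d\mu\le C\|f\|_{W^{1,2}(U)}^2$, or at least a weaker variant adequate for the limit argument, to be established by exploiting the $H$-Lipschitz regularity of $\sg_U$, the $H$-convex structure of the level sets $\{\sg_U>\rho\}$, and quasi-continuous versions of Sobolev functions. The same estimate is what would validate the zero-extension step above. Obtaining it is the core analytical difficulty and is precisely where the full dictionary between $H$-notions and quasi-notions developed in the paper becomes indispensable.
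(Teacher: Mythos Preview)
Your scheme has a genuine gap at exactly the point you flag as ``the core analytical difficulty'': the Hardy-type inequality $\int_U |f|^2\sg_U^{-2}\,d\mu\le C\|f\|_{W^{1,2}(U)}^2$ is simply false for $W^{1,2}(U)$ (as opposed to $W^{1,2}_0(U)$). Take $f\equiv 1$: then $\int_U \sg_U^{-2}\,d\mu=+\infty$ already for a half-space or a ball in finite dimensions, since $\sg_U$ behaves like the distance to $\partial U$ and $d^{-2}$ is not integrable near a smooth boundary. The same example shows that $\|f\,\nabla_H\chi_\rho\|_{L^2(U)}^2$ is of order $\rho^{-2}\mu(\{0<\sg_U<\rho\})$, and for a convex set with nontrivial boundary $\mu(\{0<\sg_U<\rho\})$ is typically of order $\rho$, not $o(\rho^2)$. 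So $f_\rho$ does not converge to $f$ in $W^{1,2}(U)$, nor is $\{f_\rho\}$ even bounded there; no amount of quasi-continuity or capacity estimates rescues this, because the obstruction is already present in finite dimensions.

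The paper circumvents the boundary blow-up entirely by never cutting off at $\partial^H U$. It first reduces to bounded $f$ supported in a ``cylinder'' $V+\overline{B}_{G}(R)$ over a fixed finite-dimensional subspace $G$, using a cutoff with \emph{fixed} $H$-Lipschitz constant $1/r$ (so only Banach--Saks, not a Hardy estimate, is needed). After a partition of unity it applies a contraction $T_\gm$ only in the $G$-direction, producing $g_\gm$ defined on a set $X_\gm$ that strictly contains $\overline{Y\cap U}^H$; the extension to $E$ is then achieved by a cutoff between two \emph{disjoint quasi-closed} sets, multiplied by $1-e_{O_m}$ for a small-capacity open set $O_m$. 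The relations between $H$-notions and quasi-notions enter precisely to guarantee that $\overline{Y\cap U}^H$ and $E\setminus X_\gm$ are quasi-closed, so that after removing $O_m$ they become genuinely separated compacta and the cutoff has bounded gradient. Thus the $1/\rho$ factor never appears; the contraction $T_\gm$ and the equilibrium potential absorb the work that your $\chi_\rho$ cannot do.
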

Here, $U$ is called $H$-convex if $(U-z)\cap H$ is convex in $H$ for all $z\in E$, and $U$ is called $H$-open if $(U-z)\cap H$ has $0$ as an interior point in $H$ for every $z\in U$.
Since convexity in $E$ implies $H$-convexity, and open sets in $E$ are $H$-open, the assumptions of \Thm{main} are essentially weaker than those of \Thm{before}.

If $E$ is finite-dimensional, \Thm{main} is easy to prove as follows: For simplicity, we further assume that $U$ is bounded and contains $0$.
For a small positive number $\gm$, we consider a contraction map $T_\gm\colon E\ni z\mapsto(1-\gm)z\in E$ and let $U_\gm:=T_\gm^{-1}(U)\supset U$.
For each $f\in W^{1,2}(U)$, the function $f\circ T_\gm$, denoted by $f_\gm$, is defined on $U_\gm$ and $f_\gm|_U$ approximates $f$ in $W^{1,2}(U)$.
Since there is a positive distance between $E\setminus U_{\gm/2}$ and the closure of $U$, we can take a Lipschitz function $\ph$ on $E$ such that $\ph=1$ on $U$ and $\ph=0$ on $E\setminus U_{\gm/2}$.
Then, $\ph f_\gm$ is well-defined as a function in $W^{1,2}(E)$ and $(\ph f_\gm)|_U=f_\gm|_U$, which deserves to be an approximating function of $f$.

This proof breaks down when $E$ is infinite-dimensional, since measures $\mu$ and $\mu\circ T_\gm^{-1}$ are mutually singular.
Therefore, our strategy is decomposing $E$ into a finite-dimensional space and an auxiliary space, and applying the procedure stated above for each finite-dimensional section.
\Thm{before} was proved in this way.
The proof of \Thm{main} is similar to that of \Thm{before}, but more technically involved.
This is because we have to treat the topology of $E$ induced by the $H$-distance, which is neither metrizable nor second-countable; we cannot utilize the general theory of good topological spaces.
In order to overcome this difficulty, we firstly study the relations between $H$-notions and quasi-notions, and we use them for removing a suitable set with small capacity to adopt a method utilized in the proof of \Thm{before}.

The remainder of this paper is organized as follows. In Section~2, we introduce a framework, and we prove some preliminary results that are of contextual interest. Some results may be known to experts; nonetheless, we provide proofs of the claims for which the author could not find suitable references. In Section~3, we prove \Thm{main}.
In Section~4, we discuss some applications.
\section{Framework and preliminary propositions}
Let $(E,H,\mu)$ be an abstract Wiener space.
That is, $E$ is a real Banach space with norm $|\cdot|_E$, $H$ is a real separable Hilbert space that is continuously embedded in $E$, and $\mu$ is a Gaussian measure on $E$ such that
\[
  \int_E \exp\left(\sqrt{-1}\,l(z)\right)\mu(dz)=\exp\left(-|l|_H^2/2\right)\quad
  \mbox{for all }l\in E^*\subset H^*\simeq H\subset E.
\]
Here, we denote the topological duals of $E$ and $H$ by $E^*$ and $H^*$, respectively, and we adopt the inclusions and identification stated above.
We always assume that $E$ is infinite-dimensional. 
The inner product and norm of $H$ are denoted by $\la\cdot,\cdot\ra$ and $|\cdot|_H$, respectively.
For $l\in E^*$ and $z\in E$, $l(z)$ also denotes $\la l,z\ra$.
This terminology is consistent with the inner product of $H$ when the inclusions $E^*\subset H\subset E$ are taken into consideration.
For $s\in\R$, $z\in E$, $A\subset E$, and $B\subset E$, we set $sA=\{sa\mid a\in A\}$ and $A\pm B=\{a\pm b\mid a\in A,\ b\in B\}$, and we denote $\{z\}\pm A$ by $z\pm A$.
The following is a basic property of $\mu$ (see, e.g., \cite[Corollary~2.5.4]{Bo} for the proof).
\begin{prop}\label{prop:ergodicity}
Let $A\in\fM(E)$ and $F$ be a dense linear space of $H$.
If $A+F=A$, either $\mu(A)=0$ or $\mu(E\setminus A)=0$ holds.
\end{prop}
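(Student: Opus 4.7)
The plan is two steps: first, upgrade the translation invariance of $A$ from $F$ to all of $H$ (modulo $\mu$-null sets); second, invoke a standard Gaussian zero--one law.

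For the first step, observe that $A+F=A$, applied with both $\pm h$ for $h\in F$, gives $A+h=A$ as sets for every $h\in F$, hence $\mathbf{1}_A\circ T_h=\mathbf{1}_A$ pointwise, where $T_h(z):=z+h$. I would then show that the map $H\ni h\mapsto \mathbf{1}_A\circ T_h\in L^2(\mu)$ is continuous at $0$. The key input is the Cameron--Martin quasi-invariance theorem: for $h\in H$, the pushforward $T_h^*\mu$ is equivalent to $\mu$ with density $\rho_h(z)=\exp(\la h,z\ra-|h|_H^2/2)$ (using the measurable linear extension of $\la h,\cdot\ra$), and $\rho_h\to 1$ in $L^p(\mu)$ for every $p<\infty$ as $|h|_H\to 0$. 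A standard density argument, approximating $\mathbf{1}_A$ in $L^2(\mu)$ by smooth cylindrical functions on which translation-continuity is immediate, then yields the desired continuity. Combining this with density of $F$ in $H$ gives $\mu\bigl((A+h)\triangle A\bigr)=0$ for every $h\in H$.

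For the second step, I would show that any $B\in\fM(E)$ with $\mu\bigl((B+h)\triangle B\bigr)=0$ for all $h\in H$ satisfies $\mu(B)\in\{0,1\}$. One route is via the Wiener chaos expansion $\mathbf{1}_B=\sum_{n\ge 0}I_n(f_n)$ combined with the translation formula for multiple stochastic integrals; $H$-invariance forces $f_n\equiv 0$ for $n\ge 1$, so $\mathbf{1}_B$ is a.s.\ constant. Alternatively, one may condition on finite-dimensional orthogonal projections and appeal to the ergodicity of nondegenerate Gaussian laws on $\R^n$ under translations, promoting the conclusion back to $E$ by a martingale convergence argument.

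The main delicate point is the continuity step, where care is needed because $\mathbf{1}_A\circ T_h$ is a priori only defined $\mu$-a.e.; this is handled precisely by Cameron--Martin quasi-invariance, which ensures that $\fM(E)$ itself is stable under $H$-translations. The remaining ingredients---density of $F$ and the zero--one law---are then essentially routine.
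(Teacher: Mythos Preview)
The paper does not supply its own proof of this proposition; it merely cites \cite[Corollary~2.5.4]{Bo}. So there is nothing to compare against beyond noting that your outline is a correct and standard route to this Gaussian zero--one law, in the same spirit as the treatment in Bogachev's monograph.

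One small point worth making explicit in your Step~1: when you run the three-term triangle inequality with a cylindrical approximation $g$ of $\mathbf{1}_A$, arrange that $0\le g\le 1$ (truncation only improves the $L^2$-approximation to an indicator). Then
\[
\|(\mathbf{1}_A-g)\circ T_h\|_{L^2(\mu)}^2=\int|\mathbf{1}_A-g|^2\rho_{-h}\,d\mu
\le \Bigl(\int|\mathbf{1}_A-g|^4\,d\mu\Bigr)^{1/2}\|\rho_{-h}\|_{L^2(\mu)}
\le 2\,\|\mathbf{1}_A-g\|_{L^2(\mu)}\,e^{|h|_H^2/2},
\]
which gives the needed uniform-in-$h$ control and closes the argument. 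Both options you list for Step~2 (Wiener chaos, or conditioning on finite-dimensional projections and invoking Kolmogorov's zero--one law for the tail) are valid.
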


For $X\in\fM(E)$, the $\sg$-field $\{A\in\fM(E)\mid A\subset X\}$ on $X$ is denoted by $\fM(X)$.
Given $X\in\fM(E)$, a separable Hilbert space $\hH$, and $p\in[1,+\infty]$, the $\hH$-valued $L^p$-space on the measure space $(X,\fM(X), \mu|_X)$ is denoted by $L^p(X\to \hH)$.
When $\hH=\R$, it is simply denoted by $L^p(X)$.
Its canonical norm is denoted by $\|\cdot\|_{L^p(X)}$.

The space $\FCb^1(E)$ of smooth cylindrical functions on $E$ is defined as
\[
\FCb^1(E)=\left\{u\colon E\to\R\left|\,\begin{array}{ll} u(z)=f(\la l_1,z\ra,\ldots,\la l_m,z\ra),\ l_1,\ldots,l_m\in E^*,\\ f\in C_b^1(\R^m)\text{ for some }m\in\N \end{array}\right.\!\!\!\right\},
\]
where $C_b^1(\R^m)$ is the set of all bounded $C^1$-functions on $\R^m$ with bounded first order derivatives.
Let $G$ be a finite-dimensional subspace of $E^*$.
We define a closed subspace $G^\perp$ of $E$ as
$G^\perp=\{z\in E\mid \la h,z\ra=0 \mbox{ for every }h\in G\}$.
The direct sum $G^\perp\dotplus G$ is identified with $E$.
 The canonical projection maps from $E$ to $G^\perp$ and $G$ are denoted by $P_G$ and $Q_G$, respectively.
To be precise, they are defined as follows:
\[
  P_G z=z-Q_G z,\quad
  Q_G z=\sum_{i=1}^m \la h_i,z\ra h_i,
\]
where $\{h_i\}_{i=1}^m\subset G\subset H\subset E$ is an orthonormal basis of $G$ in $H$.
The image measures of $\mu$ by $P_G$ and $Q_G$ are denoted by $\mu_{G^\perp}$ and $\mu_G$, respectively.
Both measures are centered Gaussian measures; in particular, $\mu_G$ is described as
\[
  \mu_G(dy)=(2\pi)^{-m/2}\exp(-|y|_H^2/2)\,\lm_m(dy),
\]
where $m=\dim G$ and $\lm_m$ denotes the Lebesgue measure on $G$.
The product measure of $\mu_{G^\perp}$ and $\mu_G$ is identified with $\mu$.
When $G=\R h$ for some $h\in E^*$, we write $h^\perp$, $\mu_{h^\perp}$, and $\mu_h$ for $G^\perp$,  $\mu_{G^\perp}$, and $\mu_G$, respectively.

Let $X\in\fM(E)$.
For $h\in E^*\setminus\{0\}\subset E$ and $x\in h^\perp$, we define
\[
  I^X_{x,h}=\{s\in\R\mid x+sh\in X\}.
\]
We fix a linear subspace $K$ of $E^*$ that is dense in $H$.
We call $X$ {\em $K$-moderate} if for each $h\in K\setminus\{0\}$, the boundary of $I^X_{x,h}$ in $\R$ is a Lebesgue null set for $\mu^\perp$-a.e.\,$x\in h^\perp$.
It is evident that $H$-convex sets in $\fM(E)$ are $K$-moderate.
For a function $f$ on $X$, $x\in E$, and $h\in E^*\setminus\{0\}$, we define a function $f_h(x,\cdot)$ on $I^X_{x,h}$ as $f_h(x,s)=f(x+sh)$.

Suppose that $X$ is $K$-moderate and $\mu(X)>0$.
For $h\in K\setminus\{0\}$, let $\Dom(\cE^X_h)$ be the set of all functions $f$ in $L^2(X)$ such that the following hold:
\begin{itemize}
\item For $\mu_{h^\perp}$-a.e.\,$x\in h^\perp$, $f_h(x,\cdot)$ has an absolutely continuous version $\tilde f_h(x,\cdot)$ on the interior of the closure of $I^X_{x,h}$ in $\R$.
\item There exists an element of $L^2(X)$, denoted by $\partial_h f$,  such that for $\mu_{h^\perp}$-a.e.\,$x\in h^\perp$,
$(\partial_h f)(x+sh)=\frac{\partial \tilde f_h}{\partial s}(x,s)$ for a.e.\,$s\in I^X_{x,h}$ with respect to the one-dimensional Lebesgue measure.
\end{itemize}
Then, the bilinear form $(\cE^X_h,\Dom(\cE^X_h))$ on $L^2(X)$, defined as
\[
  \cE^X_h(f,g)=\int_X (\partial_h f)(\partial_h g)\,d\mu,\quad
  f,g\in\Dom(\cE^X_h),
\]
is a closed form from \cite[Theorem~3.2]{AR90}. 
The $(1,2)$-Sobolev space $W^{1,2}(X)$ on $X$ is then defined as
\[
  W^{1,2}(X)=\left.\left\{f\in\ \bigcap_{\mathclap{h\in K\setminus\{0\}}}\ \Dom(\cE^X_h)\,\right|\,
   \parbox{0.55\textwidth}{%
  there exists $Df\in L^2(X\to H)$ such that
  $\la Df,h\ra=\partial_h f$ $\mu$-a.e.\ on $X$ for every $h\in K\setminus\{0\}$}\right\}.
\]
Space $W^{1,2}(X)$ formally corresponds to the maximal domain in the terminology of \cite{AKR90} and the weak Sobolev space in that of \cite{Eb}, even though the validity of these terminologies have not been investigated in our situation because our framework does not satisfy the conditions in the corresponding theorems in \cite{AKR90,Eb}.
The bilinear form $(\cE^X, W^{1,2}(X))$ on $L^2(X)$, defined as
\[
  \cE^X(f,g)=\int_X\la Df,Dg\ra\,d\mu,\quad
  f,g\in W^{1,2}(X),
\]
is a local Dirichlet form in terms of \cite[Definition~I.5.1.2]{BH}.
We note some properties for future reference.
\begin{prop}[{cf.~\cite{BH}, \cite[Proposition~2.1]{Hi03}}]\label{prop:derivation}
  Let $\Phi$ be a Lipschitz function on $\R$ and let $f$ and $g$ be functions in $W^{1,2}(X)$. Then:
  \begin{enumerate}
    \item For any Lebesgue null set $A$ of $\R$, $D f=0$ $\mu$-a.e.\ on $f^{-1}(A)$.
    In particular, if $f=0$ on a measurable set $B$, then $D f=0$ $\mu$-a.e.\ on $B$.
    \item $\Phi(f)\in W^{1,2}(X)$ and $D(\Phi(f))=\Phi'(f) Df$ $\mu$-a.e.
    \item In addition, if $f,g\in L^\infty(X)$, then $fg\in W^{1,2}(X)$ and $D(fg)=f(Dg)+g(Df)$ $\mu$-a.e.
    \end{enumerate}
\end{prop}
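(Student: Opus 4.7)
The plan is to reduce each assertion to the one-dimensional theory of absolutely continuous functions along each direction $h\in K\setminus\{0\}$ and then reassemble by invoking the density of $K$ in $H$. Concretely, I will exploit the very definition of $W^{1,2}(X)$: $f\in W^{1,2}(X)$ iff, for each $h\in K\setminus\{0\}$ and $\mu_{h^\perp}$-a.e.\ $x\in h^\perp$, the slice $f_h(x,\cdot)$ admits an absolutely continuous version $\tilde f_h(x,\cdot)$ on each connected component of the interior of the closure of $I^X_{x,h}$, whose classical derivative in $s$ coincides with $\la Df,h\ra(x+sh)$ almost everywhere.

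For (i), I would first invoke the one-dimensional lemma: if $u$ is absolutely continuous on an open interval $I\subset\R$ and $A\subset\R$ is Lebesgue-null, then $u'=0$ Lebesgue-a.e.\ on $u^{-1}(A)$. This follows from the Banach-indicatrix identity $\int_{B}|u'|\,ds=\int_\R N(u,B,y)\,dy$ applied to $B:=u^{-1}(A)\cap\{u'\ne0\}$, whose right-hand side vanishes because $u(B)\subset A$. Applying this slicewise would give $\partial_h f=0$ $\mu$-a.e.\ on $f^{-1}(A)$ for every $h\in K\setminus\{0\}$. I then choose a countable subset $\{h_n\}\subset K$ dense in $H$ and take the union of the (countably many) $\mu$-null exceptional sets; outside this union one has $\la Df,h_n\ra=0$ on $f^{-1}(A)$ for all $n$, whence $Df=0$ $\mu$-a.e.\ on $f^{-1}(A)$. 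The ``in particular'' clause is the case $A=\{0\}$.

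For (ii), the case $\Phi\in C_b^1(\R)$ is immediate from the slicewise one-dimensional chain rule for absolutely continuous functions: $\Phi(f)_h(x,\cdot)$ is absolutely continuous with derivative $\Phi'(\tilde f_h(x,s))\,\partial_s\tilde f_h(x,s)$, and the bound on $\Phi'$ places this derivative in $L^2$. For a general Lipschitz $\Phi$ I would approximate by mollifications $\Phi_n\in C^1(\R)$, arranging $\|\Phi_n'\|_\infty$ bounded by the Lipschitz constant of $\Phi$ and $\Phi_n'\to\Phi'$ Lebesgue-a.e.\ on $\R$. Here (i) enters decisively: it guarantees that $Df=0$ $\mu$-a.e.\ on the preimage of the Lebesgue-null set where $\Phi_n'\not\to\Phi'$ (and on the preimage of the non-differentiability set of $\Phi$), so that $\Phi_n'(f)Df\to\Phi'(f)Df$ pointwise $\mu$-a.e. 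A dominated-convergence argument combined with the closedness of $\cE^X$ then both certifies $\Phi(f)\in W^{1,2}(X)$ and transfers the chain-rule identity. For (iii), the one-dimensional Leibniz rule yields $\partial_s(\tilde f_h\tilde g_h)=\tilde f_h\,\partial_s\tilde g_h+\tilde g_h\,\partial_s\tilde f_h$ on slices; the hypothesis $f,g\in L^\infty(X)$ makes $fg\in L^2(X)$ and $f\,Dg+g\,Df\in L^2(X\to H)$, and reassembling over a countable dense subset of $K$ as in (i) identifies $D(fg)$.

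The main technical care will lie in the slicewise step: $I^X_{x,h}$ is only guaranteed to be measurable with null boundary (by $K$-moderateness), so the absolutely continuous versions live on an open set that may be disconnected and whose complement in $I^X_{x,h}$ is Lebesgue-null; one must apply the one-dimensional theory separately on each component and verify that the exceptional boundary carries no mass. Once that bookkeeping is done, the rest reduces to combining null sets over a countable dense family in $K$ and invoking closedness of $\cE^X$. Part (i) is the essential preparatory step that allows (ii) to cope with non-smooth Lipschitz $\Phi$, exactly as in the classical Dirichlet-form treatment in \cite{BH}.
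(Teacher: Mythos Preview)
The paper does not actually prove this proposition: it is stated with the citation ``cf.~\cite{BH}, \cite[Proposition~2.1]{Hi03}'' and used without proof, so there is no in-paper argument to compare against. Your slicewise reduction to the one-dimensional theory of absolutely continuous functions, followed by reassembly over a countable dense subset of $K$, is precisely the standard route taken in the Bouleau--Hirsch framework the paper cites, and your argument is correct as outlined; the only point worth tightening is that in (ii) you should explicitly invoke the closedness of $(\cE^X_h,\Dom(\cE^X_h))$ (established just before the definition of $W^{1,2}(X)$) when passing to the limit along $\Phi_n$, rather than only the closedness of $\cE^X$.
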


 We write $\cE$ for $\cE^E$.
The norm $\|\cdot\|_{W^{1,2}(X)}$ of $W^{1,2}(X)$ is given by $\|f\|_{W^{1,2}(X)}=\left(\cE^X(f,f)+\|f\|_{L^2(X)}^2\right)^{1/2}$.
In general, although $W^{1,2}(X)$ may depend on the choice of $K$, we omit the dependency on $K$ from the notation for simplicity.
It is known that $W^{1,2}(E)$ does not depend on the choice of $K$ and includes $\FCb^1(E)$ as a dense subset in the topology induced by $\|\cdot\|_{W^{1,2}(E)}$.
Therefore, under the assumptions on $U$ in \Thm{main}, conclusion~\Eq{property2} implies {\em a posteriori} that $W^{1,2}(U)$ is independent of the choice of $K$.

We now recall the concepts of capacity and the associated quasi-notions.
Since we use these terminologies with respect to only $(\cE,W^{1,2}(E))$, we define them in this particular case.
For open subsets $O$ of $E$, the capacity of $O$ (with respect to $(\cE,W^{1,2}(E))$) is defined as
\[
\Cp_{1,2}(O):=\inf\{\cE(f,f)+\|f\|_{L^2(E)}^2\mid f\in W^{1,2}(E)\text{ and $f\ge1$ $\mu$-a.e.\ on $O$}\}.
\]
The infimum stated above is attained by a unique function $e_O$, known as the equilibrium potential of $O$. It holds that $0\le e_O\le1$ $\mu$-a.e.\ and $e_O=1$ $\mu$-a.e.\ on $O$.
For a general subset $A$ of $E$, its capacity is defined as 
\[
\Cp_{1,2}(A):=\inf\{\Cp_{1,2}(O)\mid \text{$O$ is open and $O\supset A$}\}.
\]
We remark that $\Cp_{1,2}$ is countably subadditive.
A function $f$ on $E$ is called quasi-continuous if for any $\eps>0$, there exists an open set $O$ such that $\Cp_{1,2}(O)<\eps$ and $f|_{E\setminus O}$ is continuous on $E\setminus O$.
Since $(\cE,W^{1,2}(E))$ is quasi-regular (see \cite{MR} for the definition), each element of $W^{1,2}(E)$ has a quasi-continuous modification.
A subset $A$ of $E$ is called quasi-closed if for any $\eps>0$, there exists an open set $O$ such that $\Cp_{1,2}(O)<\eps$ and $A\setminus O$ is closed. A subset $A$ is called quasi-open if $E\setminus A$ is quasi-closed. 
For two functions $f$ and $g$ on $E$, we write $f=g$ q.e.\ if $\Cp_{1,2}(\{f\ne g\})=0$.

A subset $E_0$ of $E$ is called $H$-invariant if $E_0+H=E_0$.
\begin{defn}\label{def:HL}
Let $f$ be a $[-\infty,+\infty]$-valued function on $E$.
\begin{enumerate}
\item $f$ is called $H$-continuous if there exists an $H$-invariant set $E_0$ such that $\mu(E\setminus E_0)=0$, $|f(z)|<\infty$ for every $z\in E_0$, and the function $f(z+\cdot)$ on $H$ is continuous in the topology of $H$ for every $z\in E_0$.\footnote{This definition may slightly differ from those in other literatures.}
\item $f$ is called $H$-Lipschitz if there exist an $H$-invariant set $E_0$ and a constant $M\ge0$ such that $\mu(E\setminus E_0)=0$, $|f(z)|<\infty$ for every $z\in E_0$, and $|f(w)-f(z)|\le M|w-z|_H$ for all $w,z\in E_0$. In this case, we say that $f$ has $H$-Lipschitz constant (at most) $M$.
\end{enumerate}
\end{defn}
The following is a variant of Rademacher's theorem.
\begin{thm}[{\cite{ES93}, cf.\ \cite[Theorem~4.2]{Ku82}}]\label{th:rademacher}
Let $f$ be an $\fM(E)$-measurable function on $E$ that is $H$-Lipschitz with $H$-Lipschitz constant $M$. Then, $f\in W^{1,2}(E)$ and $|Df|_H\le M$ $\mu$-a.e.
\end{thm}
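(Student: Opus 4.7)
The plan has three parts: (i) establish one-dimensional directional derivatives using Fubini and the $H$-invariance of the exceptional set, (ii) assemble the $H$-valued gradient as an $L^2$-limit of partial sums along an orthonormal basis of $H$ drawn from $K$, and (iii) control its $H$-norm by the finite-dimensional Rademacher theorem on random affine slices. Before the main argument, I would verify $f \in L^2(E)$: measurability yields a sublevel set $\{|f|\le C\}$ of positive $\mu$-measure, whose $H$-neighborhoods lie inside $\{|f|\le C+Mt\}$, and the Gaussian isoperimetric inequality then forces $f$ to have Gaussian-like tails.

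Fix $h \in K\setminus\{0\}\subset H$. Since $E_0$ is $H$-invariant with $\mu(E\setminus E_0)=0$, Fubini shows that for $\mu_{h^\perp}$-a.e.\ $x\in h^\perp$ the entire line $x+\R h$ lies in $E_0$; along it, $s\mapsto f(x+sh)$ is $M|h|_H$-Lipschitz, hence absolutely continuous with derivative pointwise bounded by $M|h|_H$. This defines $\partial_h f \in L^\infty(E)$ with $|\partial_h f|\le M|h|_H$ $\mu$-a.e., so $f \in \Dom(\cE_h^E)$. Choose an orthonormal basis $\{h_i\}_{i\ge 1}\subset K$ of $H$ (available because $K$ is dense in $H$) and set $\Phi_n := \sum_{i=1}^n (\partial_{h_i}f)\,h_i \in L^2(E\to H)$. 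The core claim is $|\Phi_n|_H \le M$ $\mu$-a.e.\ for every $n$; granting it, Pythagoras in $H$ gives $|\Phi_m-\Phi_n|_H^2 = |\Phi_m|_H^2 - |\Phi_n|_H^2$ for $m>n$, and together with the monotone bound $\int|\Phi_n|_H^2\,d\mu \le M^2$ shows that $(\Phi_n)$ is Cauchy in $L^2(E\to H)$, with limit $\Phi$ satisfying $|\Phi|_H \le M$ $\mu$-a.e.

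The claim is verified slice by slice. Setting $G_n := \mathrm{span}(h_1,\dots,h_n)\subset E^*$ and decomposing $\mu = \mu_{G_n^\perp}\otimes \mu_{G_n}$, the $H$-invariance of $E_0$ ensures that for $\mu_{G_n^\perp}$-a.e.\ $x \in G_n^\perp$ the full affine slice $x+G_n$ sits in $E_0$ and $y\mapsto f(x+y)$ is $M$-Lipschitz on $G_n$ in the Euclidean metric (which coincides with $|\cdot|_H$ on $G_n$). The classical finite-dimensional Rademacher theorem then yields a.e.\ differentiability on the slice with gradient norm $\le M$; its coordinates relative to $h_1,\dots,h_n$ agree $\mu$-a.e.\ with those of $\Phi_n$.

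To conclude $\Phi = Df$, the identity $\la\Phi,h_j\ra = \partial_{h_j}f$ a.e.\ is immediate by construction; for a general $h\in K$, expanding $h = \sum_i \la h,h_i\ra h_i$ in $H$ and applying the same slice-wise Rademacher argument to $\mathrm{span}(h,h_1,\dots,h_n)$, the linearity of the finite-dimensional gradient identifies $\la\Phi,h\ra$ with $\partial_h f$ $\mu$-a.e.\ after passing to the limit. The main technical obstacle is precisely this final reconciliation: assembling the scalar family $\{\partial_h f\}_{h\in K}$ into a single coherent $H$-valued gradient, for which the slice-wise Rademacher theorem provides the decisive link; everything else reduces to careful bookkeeping with Fubini applied to random Lipschitz restrictions.
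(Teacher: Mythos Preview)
The paper does not give its own proof of this theorem; it is quoted from Enchev--Stroock \cite{ES93} (with a precursor in Kusuoka \cite{Ku82}) and used as a black box throughout. So there is nothing in the paper to compare your argument against.

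That said, your proof is correct and follows a standard route to this result. The integrability step via Gaussian isoperimetry is fine; the slice-wise application of classical Rademacher on $x+G_n$ to obtain $|\Phi_n|_H\le M$ $\mu$-a.e.\ is the decisive point, and the orthogonality-based Cauchy argument for $(\Phi_n)$ in $L^2(E\to H)$ is clean. The only place that would benefit from one more line is the final identification for general $h\in K$: on the slice $x+\mathrm{span}(h_1,\dots,h_n,h)$ the finite-dimensional gradient is linear in the direction, giving $\partial_h f=\langle\Phi_n,h\rangle+\partial_{h-P_{G_n}h}f$ $\mu$-a.e., and since $|\partial_{h-P_{G_n}h}f|\le M\,|h-P_{G_n}h|_H\to0$ the identity $\langle\Phi,h\rangle=\partial_h f$ follows. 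You indicate this but do not quite write it down; once made explicit, the argument is complete.
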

We introduce some concepts related to the $H$-distance.
\begin{defn}\label{def:H}
For a subset $A$ of $E$ and $z\in E$, we define
\[
\sd_E(z,A)=\inf\{|z-w|_E\mid w\in A\}\text{ and }
\sd_H(z,A)=\inf\{|z-w|_H\mid w\in A\cap (z+H)\},
\]
where we set $\inf\emptyset=+\infty$.
We also define the following sets:
\begin{itemize}
\item the $H$-closure $\overline{A}^H:=\{z\in E\mid \sd_H(z,A)=0\}$, 
\item the $H$-boundary $\partial^H A:=\overline{A}^H\cap\overline{E\setminus A}^H$,
\item the $H$-exterior $A^{H\text{-ext}}:=E\setminus\overline{A}^H$,
\item the $H$-interior $A^{H\text{-int}}:=A\setminus\partial^H A\,(=(A^{H\text{-ext}})^{H\text{-ext}})$.
\end{itemize}
For $z\in E$ and $s>0$, we define
\[
  B_H(z,s)=\{z+h\mid h\in H,\ |h|_H<s\}\text{ and }
  \BBH(z,s)=\{z+h\mid h\in H,\ |h|_H\le s\}.
\]
\end{defn}
We omit $z$ from the notation if $z=0$.
Note that $\BBH(z,s)$ is compact in $E$ (see, e.g., \cite[Corollary~3.2.4]{Bo} for the proof.)

Let us recall that a Suslin set in $E$ is a continuous image of a certain Polish space. Suslin sets are universally measurable and closed under countable intersections and countable unions. Borel sets of $E$ are Suslin sets.  
More precisely speaking, a subset $A$ of $E$ is Borel if and only if both $A$ and $E\setminus A$ are Suslin sets (see, e.g., \cite{Bou,DM} for further details).
\begin{lem}\label{lem:distance}
If $A$ is a Suslin subset of $E$ with $\mu(A)>0$, then $\sd_H(\cdot,A)$ is universally measurable and $H$-Lipschitz with $H$-Lipschitz constant $1$.
\end{lem}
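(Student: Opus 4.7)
The plan is to exhibit a full-measure $H$-invariant set on which $\sd_H(\cdot,A)$ is finite and satisfies the $1$-Lipschitz bound, and separately to verify universal measurability by identifying the sub-level sets with explicit Suslin sets.

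First I would set $E_0:=A+H$. This set is automatically $H$-invariant since $H+H=H$. Because $A$ is Suslin and addition $A\times H\to E$ is continuous from a product of Polish (hence Suslin) pieces, the image $A+H$ is Suslin in $E$, hence universally measurable and in particular in $\fM(E)$. Since $A+H\supset A$ gives $\mu(A+H)\ge\mu(A)>0$, \Prop{ergodicity} applied to $A+H$ (with $F=H$ itself) forces $\mu(E\setminus E_0)=0$.

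Next I would verify the $H$-Lipschitz estimate with constant $1$ on $E_0$. Suppose $z_1,z_2\in E_0$ with $z_2-z_1\in H$; then $z_1+H=z_2+H$, so the slices $A\cap(z_1+H)$ and $A\cap(z_2+H)$ coincide and are nonempty since each $z_i\in A+H$. For any $w$ in this common set,
\[
|z_1-w|_H\le|z_1-z_2|_H+|z_2-w|_H,
\]
and taking the infimum over $w$ yields $\sd_H(z_1,A)\le|z_1-z_2|_H+\sd_H(z_2,A)$; the reverse inequality follows by swapping $z_1$ and $z_2$.

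For universal measurability, the point is the identity, valid for each $r>0$,
\[
\{z\in E\mid\sd_H(z,A)<r\}=A+B_H(0,r),
\]
since $\sd_H(z,A)<r$ holds precisely when some $a\in A$ satisfies $a-z\in H$ and $|a-z|_H<r$, i.e.\ $z\in a+B_H(0,r)$ (using that the ball is symmetric). Because $H$ is a separable Hilbert space continuously embedded in $E$, the open ball $B_H(0,r)$ is a continuous image of a Polish space in $E$ and hence Suslin; then $A+B_H(0,r)$, being the continuous image of the Suslin set $A\times B_H(0,r)$ under addition, is itself Suslin and therefore universally measurable. Running $r$ through the positive rationals then shows $\sd_H(\cdot,A)$ is universally measurable.

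The delicate part is the Suslin bookkeeping: one needs stability of the Suslin class under continuous images and finite products, which is standard from the descriptive set-theory sources cited in the paper. Once this is in hand, both conclusions fall out of the two observations above.
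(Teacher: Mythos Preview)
Your proof is correct and follows essentially the same approach as the paper: both arguments obtain universal measurability by expressing sub-level sets of $\sd_H(\cdot,A)$ as Suslin sets of the form $A+(\text{ball in }H)$, and both invoke \Prop{ergodicity} to get a full-measure $H$-invariant set on which $\sd_H(\cdot,A)$ is finite. The only cosmetic difference is that the paper works with the non-strict sub-level sets via the identity $\{\sd_H(\cdot,A)\le r\}=\bigcap_{k\ge1}\bigl(A+\BBH(r+1/k)\bigr)$ using the closed (compact-in-$E$) balls, whereas you use the cleaner identity $\{\sd_H(\cdot,A)<r\}=A+B_H(0,r)$ with open balls; both routes rest on the same Suslin stability properties, and the remaining Lipschitz estimate is handled identically.
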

\begin{proof}
Measurability of $\sd_H(\cdot,A)$ follows from the identity
\[
\{\sd_H(\cdot,A)\le r\}=\bigcap_{k=1}^\infty \left(A+\BBH(r+1/k)\right)
\]
for every $r\ge0$.
The set $\{\sd_H(\cdot,A)<\infty\}$ has a full $\mu$-measure from \Prop{ergodicity}.
The remaining assertions are easy to prove.
\end{proof}
The next proposition is proved in \cite{RR05} in a more general context. (Similar results are found, e.g., in \cite{RSc99} in different frameworks.) 
Since our situation is simpler and the proof is shortened, we include the proof for the readers' convenience.
\begin{prop}\label{prop:qe}
Suppose that a subset $A$ of $E$ is $H$-open and $\mu(A)=0$.
Then, $\Cp_{1,2}(A)=0$.
\end{prop}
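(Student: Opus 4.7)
The plan is to package the $H$-openness and the $\mu$-negligibility of $A$ into a single non-negative function $g\in W^{1,2}(E)$ whose $\mu$-essential zero set exhausts $E\setminus A$, and then to convert ``$g=0$ $\mu$-a.e.'' into ``$g=0$ q.e.'' by a quasi-continuity argument. Since $\mu(A)=0$ and $A\in\fM(E)$, first fix a Borel hull $A^{*}\supset A$ with $\mu(A^{*})=0$, and then set
\[
g(z):=1\wedge\sd_H(z,E\setminus A^{*}),\qquad z\in E.
\]
Because $E\setminus A^{*}$ is Borel (hence Suslin) with $\mu(E\setminus A^{*})=1$, \Lem{distance} shows that $\sd_H(\cdot,E\setminus A^{*})$ is universally measurable and $H$-Lipschitz with $H$-Lipschitz constant $1$; these properties descend to the bounded truncation $g$. \Thm{rademacher} then gives $g\in W^{1,2}(E)$.

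\textbf{Pointwise behaviour of $g$.} For $z\in E\setminus A^{*}$ the point $z$ itself lies in $(z+H)\cap(E\setminus A^{*})$, so $\sd_H(z,E\setminus A^{*})=0$ and $g(z)=0$; thus $g$ vanishes on a set of full $\mu$-measure. For $z\in A$, the $H$-openness of $A$ supplies $r_{z}>0$ with $B_H(z,r_{z})\subset A\subset A^{*}$; then $(z+H)\cap(E\setminus A^{*})$ is disjoint from $B_H(z,r_{z})$, forcing $\sd_H(z,E\setminus A^{*})\ge r_{z}$ and hence $g(z)\ge r_{z}\wedge 1>0$. Consequently $A\subset\{g>0\}$ while $g=0$ $\mu$-a.e.

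\textbf{Upgrading to quasi-everywhere, and the main obstacle.} If one can assert that the specific Borel representative $g$ above is itself quasi-continuous with respect to $(\cE,W^{1,2}(E))$, then two quasi-continuous functions that agree $\mu$-a.e.\ must agree q.e., so $g=0$ q.e., $\Cp_{1,2}(\{g>0\})=0$, and monotonicity yields $\Cp_{1,2}(A)=0$. The one delicate point is therefore identifying the Borel $H$-Lipschitz representative $g$ with the abstract quasi-continuous modification guaranteed by quasi-regularity of $(\cE,W^{1,2}(E))$: Theorem~\ref{th:rademacher} only certifies membership in $W^{1,2}(E)$ and says nothing about the pointwise version. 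This is exactly the type of comparison between $H$-notions and quasi-notions announced in the introduction---namely, that in the Gaussian Wiener setting a Borel $H$-Lipschitz (more generally $H$-continuous) function is automatically quasi-continuous on the nose. I expect this implication to be the principal technical hurdle; once it is available, the monotonicity/uniqueness step above concludes the proof.
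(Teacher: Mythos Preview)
Your reduction is correct as far as it goes: if the specific Borel $H$-Lipschitz representative $g$ is itself quasi-continuous, then $g=0$ $\mu$-a.e.\ forces $g=0$ q.e., and $\Cp_{1,2}(A)\le\Cp_{1,2}(\{g>0\})=0$. But the step you defer is a genuine gap here. In the paper the implication ``$\fM(E)$-measurable and $H$-Lipschitz $\Rightarrow$ quasi-continuous'' is \Lem{Lip}, and its proof \emph{uses} \Prop{qe}: one builds continuous approximants $g_n$ converging q.e.\ to some quasi-continuous $\tilde g$, checks that the set $B=\{\lim_n g_n=f\}$ has full $\mu$-measure and is $H$-closed, and then invokes \Prop{qe} on the $H$-open $\mu$-null set $E\setminus B$ to conclude $f=\tilde g$ q.e. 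So your proposal is circular within the paper's logical order, and you have not offered an independent route to the quasi-continuity step.

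The paper's own argument sidesteps quasi-continuity entirely with a direct capacity estimate. For $\eps>0$ take a compact $C\subset E\setminus A$ with $\mu(E\setminus C)<\eps$, and use the test function $f_r:=(r^{-1}\sd_H(\cdot,C))\wedge1$: it equals $1$ on the open set $E\setminus(C+\BBH(r))$ and $0$ on $C$, and since $Df_r=0$ $\mu$-a.e.\ on $C$ by \Prop{derivation}~(i), one gets $\Cp_{1,2}\bigl(E\setminus(C+\BBH(r))\bigr)\le(r^{-2}+1)\eps$. Letting $\eps\to0$ gives $\Cp_{1,2}\bigl(E\setminus((E\setminus A)+\BBH(r))\bigr)=0$ for every $r>0$, and $H$-openness of $A$ yields $A=\bigcup_k\bigl(E\setminus((E\setminus A)+\BBH(1/k))\bigr)$. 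Thus \Prop{qe} is established first, by elementary means, and only afterwards used as input to \Lem{Lip}---the reverse of the dependence your argument would require.
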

\begin{proof}
We denote $E\setminus A$ by $A^c$.
Take any $\eps>0$ and a compact subset $C$ of $A^c$ such that $\mu(E\setminus C)<\eps$.
Let $r>0$ and define $C_r:=C+\BBH(r)$, which is a compact set.
Define $f_r(z)=\left(r^{-1}\sd_H(z,C)\right)\wedge1$ for $z\in E$.
Then, $f_r\in W^{1,2}(E)$ and $|Df_r|_H\le 1/r$ $\mu$-a.e.\ from \Lem{distance}, \Thm{rademacher}, and \Prop{derivation}.
Moreover, $0\le f_r\le 1$ on $E$, $f_r=1$ on $E\setminus C_r$, and $f_r=0$ on $C$.
Then,
\begin{align*}
\Cp_{1,2}\left(E\setminus\left(A^c+\BBH(r)\right)\right)
&\le \Cp_{1,2}(E\setminus C_r)
\le \cE(f_r,f_r)+\|f_r\|_{L^2(E)}^2\\
&\le \int_{E\setminus C} r^{-2}\,d\mu+\int_{E\setminus C}d\mu
\quad \text{(by \Prop{derivation} (i))}\\
&\le (r^{-2}+1)\eps.
\end{align*}
Since $\eps$ is arbitrary, $\Cp_{1,2}\left(E\setminus\left(A^c+\BBH(r)\right)\right)=0$.
Therefore, 
\[
\Cp_{1,2}(A)=\Cp_{1,2}\left(\bigcup_{k=1}^\infty \left(E\setminus\left(A^c+\BBH(1/k)\right)\right)\right)=0.\qedhere
\]
\end{proof}
\begin{cor}[{cf.\ \cite[Theorem~7.3.3]{Us}}]\label{cor:cap01}
Let $A\in \fM(E)$ be $H$-invariant.
Then, either $\Cp_{1,2}(A)=0$ or $\Cp_{1,2}(E\setminus A)=0$ holds.
\end{cor}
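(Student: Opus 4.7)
The plan is to reduce the corollary to \Prop{ergodicity} and \Prop{qe}. The first step is to observe that since $H$ is a dense linear subspace of itself and $A+H=A$, \Prop{ergodicity} applied with $F=H$ gives the dichotomy $\mu(A)=0$ or $\mu(E\setminus A)=0$. So the measure-theoretic half of the statement is essentially free.

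The next step is to upgrade from $\mu$-null to capacity-null by invoking \Prop{qe}. For this, we need to know that the relevant $H$-invariant set is in fact $H$-open. If $z\in A$, then $A\supset z+H$ because $A+H=A$, so $(A-z)\cap H = H$, which trivially contains $0$ as an interior point in $H$; hence $A$ is $H$-open. The same argument will apply to $E\setminus A$ once we check it is also $H$-invariant: if $z\in E\setminus A$ and $h\in H$, then $z+h\in A$ would force $z=(z+h)+(-h)\in A+H=A$, a contradiction. So both $A$ and $E\setminus A$ are $H$-open, and whichever one has $\mu$-measure zero has capacity zero by \Prop{qe}.

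There is no real obstacle here; the work has already been done in the two preceding propositions, and the only small point to verify carefully is that $H$-invariance of $A$ passes to $E\setminus A$, which I would write out in one line as above. The proof should therefore be only a few lines long, combining the $0$--$1$ law for $\mu$ with the implication ``$H$-open $\mu$-null $\Rightarrow$ $\Cp_{1,2}$-null.''
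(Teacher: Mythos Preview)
Your proof is correct and is exactly the paper's argument, just with the details spelled out: the paper's proof is the single sentence ``Since both $A$ and $E\setminus A$ are $H$-open, the assertion follows from Propositions~\ref{prop:ergodicity} and \ref{prop:qe},'' and you have simply unpacked why $H$-invariance implies $H$-openness for both $A$ and its complement.
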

\begin{proof}
Since both $A$ and $E\setminus A$ are $H$-open, the assertion follows from Propositions~\ref{prop:ergodicity} and \ref{prop:qe}.
\end{proof}
\begin{lem}\label{lem:Lip}
Let $f$ be an $\fM(E)$-measurable and $H$-Lipschitz function on $E$.
Then, $f$ is quasi-continuous.
\end{lem}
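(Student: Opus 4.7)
The plan is to exhibit a quasi-continuous modification $\tilde f$ of $f$ via \Thm{rademacher}, and then show that the pointwise representative $f$ coincides with $\tilde f$ quasi-everywhere; since $\tilde f$ is quasi-continuous, $f$ inherits the property. By \Thm{rademacher}, $f\in W^{1,2}(E)$, so by quasi-regularity of $(\cE,W^{1,2}(E))$ there is $\tilde f$ with $f=\tilde f$ $\mu$-a.e. It thus suffices to prove $\Cp_{1,2}(\{f\ne\tilde f\})=0$. Since $\mu(\{f\ne\tilde f\})=0$, \Prop{qe} reduces this to realizing the set (modulo a piece already of capacity zero) as $H$-open. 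On the $f$-side, \Defn{HL} supplies an $H$-invariant $\mu$-full set $E_0$ on which $f$ is $H$-continuous, and $\Cp_{1,2}(E\setminus E_0)=0$ by \Cor{cap01}. The substantive task is to produce a matching $H$-continuous representative of $\tilde f$ on a set with capacity-zero complement.

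For each $h\in H$, Cameron--Martin quasi-invariance of $\Cp_{1,2}$ under $H$-translations makes $\tilde f(\cdot+h)$ quasi-continuous, hence so is $\tilde f(\cdot+h)-\tilde f$; it equals $f(\cdot+h)-f$ $\mu$-a.e., which is bounded by $M|h|_H$ on $E_0$. The standard principle that a quasi-continuous function bounded $\mu$-a.e.\ is bounded quasi-everywhere (a consequence of the uniqueness, up to capacity zero, of quasi-continuous modifications) then gives $|\tilde f(z+h)-\tilde f(z)|\le M|h|_H$ q.e.\ in $z$. I fix a countable dense subgroup $H_0\subset H$ and enlarge by translates $N_{h-h'}-h'$ with $h,h'\in H_0$ (each of capacity zero by quasi-invariance), collecting a single capacity-zero set $N$ outside of which $\tilde f$ is $M$-Lipschitz (in the $H$-distance) along $H_0$-shifts from any base point. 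Passing to the unique Lipschitz extension in each $H$-direction defines $\hat f$ on the $H$-invariant $\mu$-full set $(E\setminus N)+H$, $H$-Lipschitz there, with $\hat f=\tilde f$ on $E\setminus N$; the complement of $(E\setminus N)+H$ has capacity zero by \Cor{cap01}.

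With both $f$ and $\hat f$ now $H$-continuous on the capacity-full $H$-invariant set $E_0\cap((E\setminus N)+H)$, the set $\{f\ne\hat f\}$ restricted there is $H$-open: at any point where $f\ne\hat f$, the continuous difference $f-\hat f$ stays nonzero on an entire $H$-ball. Hence the $H$-interior of $\{f\ne\hat f\}$ is $H$-open and $\mu$-null, and \Prop{qe} forces it to have capacity zero; combining with the capacity-zero pieces yields $\Cp_{1,2}(\{f\ne\hat f\})=0$, and since $\hat f=\tilde f$ q.e., also $\Cp_{1,2}(\{f\ne\tilde f\})=0$. The main obstacle is the construction of $\hat f$ in the second paragraph: the bookkeeping of countably many Cameron--Martin-translated capacity-zero sets is delicate, and I appeal to two standard auxiliary facts (quasi-invariance of $\Cp_{1,2}$ under $H$-translations, and the bounded-$\mu$-a.e.-implies-bounded-q.e.\ principle for quasi-continuous functions) that may need to be spelled out from the paper's primitives.
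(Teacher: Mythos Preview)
Your strategy is natural, but there is a genuine gap in the construction of $\hat f$. From the countable family $H_0$ you obtain a capacity-null set $N$ such that for every $z\notin N$ the map $H_0\ni h\mapsto\tilde f(z+h)$ is $M$-Lipschitz. You then pass to the Lipschitz extension $\Phi_z\colon H\to\R$ and declare $\hat f(z+h)=\Phi_z(h)$. The problem is the assertion ``$\hat f=\tilde f$ on $E\setminus N$'' (equivalently, that $\hat f$ is independent of the base point). If $z_1,z_2\in E\setminus N$ lie in the same $H$-coset with $z_1-z_2\in H\setminus H_0$, you need $\Phi_{z_1}(0)=\tilde f(z_1)$ to coincide with $\Phi_{z_2}(z_1-z_2)=\lim_{H_0\ni h_n\to z_1-z_2}\tilde f(z_2+h_n)$. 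But nothing you have proved forces $\tilde f(z_2+h_n)\to\tilde f(z_1)$: the $H_0$-Lipschitz control at $z_1$ only governs increments $z_1+k$ with $k\in H_0$, whereas $z_2+h_n-z_1=h_n-(z_1-z_2)\notin H_0$. Quasi-continuity of $\tilde f$ does not supply $H$-continuity either. So $\hat f$ may fail to be well-defined as a function, or may differ from $\tilde f$ on a set that is not known to have zero capacity; either way the chain $\hat f=\tilde f$ q.e.\ $\Rightarrow$ $f=\tilde f$ q.e.\ is unsupported. The difficulty is intrinsic: you are essentially trying to upgrade ``$H_0$-Lipschitz q.e.'' to ``$H$-Lipschitz q.e.'' for the quasi-continuous representative, and that step needs an independent argument.

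The paper circumvents this by never appealing to an abstract quasi-continuous modification. Instead it builds \emph{continuous} approximants $f_n$ as conditional expectations $\mathbb{E}[f\mid\sigma(Q_{G_n})]$ along finite-dimensional projections; each $f_n$ is continuous on $E$ and $H$-Lipschitz with the same constant $M$, and $f_n\to f$ $\mu$-a.e.\ by martingale convergence. Banach--Saks gives Ces\`aro means $g_n$ converging in $W^{1,2}(E)$, hence (along a subsequence) q.e.\ to a quasi-continuous $g$. Because \emph{both} $g_n$ and $f$ are $H$-Lipschitz with constant $M$, a three-term estimate shows the set $B=\{\lim_n g_n=f\}$ is $H$-closed; since $\mu(E\setminus B)=0$, \Prop{qe} gives $\Cp_{1,2}(E\setminus B)=0$, and $f=g$ q.e.\ follows. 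The point is that the $H$-Lipschitz regularity is built into the approximating sequence from the start, rather than recovered after the fact from an abstract $\tilde f$.
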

From \Thm{rademacher}, $f$ belongs to $W^{1,2}(E)$ under the assumption; thus, $f$ has a quasi-continuous modification.
The point of \Lem{Lip} is that $f$ itself is quasi-continuous without modification.
\begin{proof}[Proof of \Lem{Lip}]
From \Prop{qe}, $\Cp_{1,2}(E\setminus E_0)=0$, where $E_0$ is provided in \Defn{HL}.
Therefore, by considering $f\cdot 1_{E_0}$ instead of $f$, we may assume $E_0=E$ without loss of generality.
Let $f$ have $H$-Lipschitz constant $M$.

We take an increasing sequence $\{G_n\}_{n=1}^\infty$ of finite-dimensional subspaces of $E^*$ such that $\bigcup_{n=1}^\infty G_n$ is dense in $H$.
We also define $G_n^\perp$, $P_{G_n}$, $Q_{G_n}$, $\mu_{G_n^\perp}$, and $\mu_{G_n}$ as in the first part of this section.
For $n\in\N$, let 
\[
\hat G_n=\left\{y\in G_n\left|\, \text{$f(\cdot+y)$ is a $\mu_{G_n^\perp}$-integrable function on $G_n^\perp$}\right\}\right..
\]
From Fubini's theorem, $\mu_{G_n}(G_n\setminus \hat G_n)=0$.
Define a function $\hat f_n$ on $\hat G_n$ by 
\[
 \hat f_n(y)=
 \int_{G_n^\perp}f(x+y)\,\mu_{G_n^\perp}(dx).
\]
Then, it is easy to see that for $y,y'\in \hat G_n$,
\begin{equation}\label{eq:Gn}
  |\hat f_n(y)-\hat f_n(y')|\le M|y-y'|_H.
\end{equation}
Therefore, $\hat f_n$ extends to a continuous function $\hat{\hat {f_n}}$ that is defined on $G_n$, and \Eq{Gn} holds for every $y,y'\in G_n$ with $\hat f_n$ replaced by $\hat{\hat f}_n$.
Define a function $f_n$ on $E$ as $f_n(z)=\hat{\hat {f_n}}(Q_n(z))$ for $z\in E$.
Then, $f_n$ is continuous on $E$ and identical to the conditional expectation of $f$ given $\sg(Q_n)$.
Since $\sg(Q_n; n\in\N)=\fB(E)$, $f_n$ converges to $f$ $\mu$-a.e.\ by the martingale convergence theorem.
Moreover, since $Q_n|_H$ is a contraction operator on $H$, $f_n$ is also $H$-Lipschitz with $H$-Lipschitz constant $M$.
Then, $\{f_n\}_{n=1}^\infty$ is bounded in $W^{1,2}(E)$.
From the Banach--Saks theorem, the Ces\`aro means of a certain subsequence of $\{f_n\}$, denoted by $\{g_n\}$, converge in $W^{1,2}(E)$.
Note that $g_n$ is continuous on $E$ as well as $H$-Lipschitz with $H$-Lipschitz constant $M$.
From \cite[Proposition~III.3.5]{MR} or \cite[Theorem~2.1.4]{FOT}, by taking a subsequence if necessary, $g_n$ converges q.e.\ to some quasi-continuous function $g$.
Since $f_n$ converges to $f$ $\mu$-a.e., so does $g_n$.
Define $B=\{z\in E\mid \lim_{n\to\infty}g_n(z)=f(z)\}$.
Clearly, $\mu(E\setminus B)=0$.
Take $z\in \overline{B}^H$.
There exists a sequence $\{z_k\}_{k=1}^\infty$ in $B$ such that $\lim_{k\to\infty}|z_k-z|_H=0$.
Then,
\begin{align*}
|g_n(z)-f(z)|
&\le |g_n(z)-g_n(z_k)|+|g_n(z_k)-f(z_k)|+|f(z_k)-f(z)|\\
&\le M|z-z_k|_H+|g_n(z_k)-f(z_k)|+M|z-z_k|_H.
\end{align*}
Taking $\limsup_{n\to\infty}$ on both sides and letting $k\to\infty$, we obtain $\lim_{n\to\infty}g_n(z)=f(z)$.
Therefore, $z\in B$. That is, $\overline{B}^H=B$ and $E\setminus B$ is $H$-open.
From \Prop{qe}, $\Cp_{1,2}(E\setminus B)=0$.
This implies that $f=g$ q.e., in particular, $f$ is quasi-continuous.
\end{proof}
The following proposition, which is of contextual interest, is utilized in the proof of \Thm{main} in the next section.
\begin{prop}\label{prop:m}
Let $A\in\fM(E)$.
\begin{enumerate}
\item If $A$ is $H$-open, then $A^{H\text{\rm -ext}}$ is quasi-open; in particular, $A^{H\text{\rm -ext}},\overline{A}^H,\partial^H A\in\fM(E)$.
\item If $A$ is $H$-open, then $A$ is quasi-open.
\item If $A$ is $H$-closed, then $A$ is quasi-closed.
\end{enumerate}
\end{prop}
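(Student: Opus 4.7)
Statement (iii) is equivalent to (ii) via complementation, since $A$ is $H$-closed iff $E\setminus A$ is $H$-open, and $A$ is quasi-closed iff $E\setminus A$ is quasi-open. For (i), $A^{H\text{-ext}}$ is itself $H$-open: if $z\in A^{H\text{-ext}}$ with $r:=\sd_H(z,A)>0$, then for $h\in H$ with $|h|_H<r$ one has $\sd_H(z+h,A)\ge r-|h|_H>0$, hence $B_H(z,r)\subset A^{H\text{-ext}}$. Thus (i) reduces to (ii) applied to $A^{H\text{-ext}}$, since every quasi-open set lies automatically in $\fM(E)$ (its complement is a countable union of closed sets modulo a $\Cp_{1,2}$-nullset, hence a $\mu$-nullset). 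The remaining $\fM(E)$-memberships then follow from $\overline{A}^H=E\setminus A^{H\text{-ext}}$ and $\partial^H A=\overline{A}^H\setminus A$.

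The core task is (ii). Let $A\in\fM(E)$ be $H$-open, and choose a Borel $M\supset E\setminus A$ with $\mu(M\setminus(E\setminus A))=0$. By \Lem{distance}, $f_M:=\sd_H(\cdot,M)$ is universally measurable (so $\fM(E)$-measurable) and $H$-Lipschitz of constant $1$. The $H$-invariant set $M+H=\{f_M<\infty\}$ has $\mu$-measure $0$ or $1$ by \Prop{ergodicity}. In the first case, $M+H$ is $H$-open and $\mu$-null, so \Prop{qe} gives $\Cp_{1,2}(M+H)=0$; since $E\setminus A\subset M+H$, it follows that $\Cp_{1,2}(E\setminus A)=0$, and $A$ is quasi-open because any open $O\supset E\setminus A$ with $\Cp_{1,2}(O)<\eps$ satisfies $A\cup O=E$. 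In the second case, $f_M$ is $\mu$-a.e.\ finite and fits \Defn{HL}(ii), so by \Lem{Lip} it is quasi-continuous. The level sets $\{f_M>1/n\}$ are then quasi-open (preimages of open half-lines under a quasi-continuous function), and so is $M^{H\text{-ext}}=\bigcup_n\{f_M>1/n\}$. As $M^{H\text{-ext}}\subset E\setminus M\subset A$, it remains only to show that $A\setminus M^{H\text{-ext}}$ has capacity zero.

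This exceptional set equals $A\cap\overline{M}^H$. Writing $M=(E\setminus A)\cup N$ with $N:=M\cap A$ a Borel $\mu$-nullset contained in $A$, and using that $E\setminus A$ is $H$-closed, one obtains $\overline{M}^H=(E\setminus A)\cup\overline{N}^H$, so $A\cap\overline{M}^H=A\cap\overline{N}^H\subset\overline{N}^H$. Apply the ergodic dichotomy (\Prop{ergodicity}) again, now to the $H$-invariant set $N+H$: if $\mu(N+H)=0$, then \Prop{qe} yields $\Cp_{1,2}(\overline{N}^H)\le\Cp_{1,2}(N+H)=0$ and the argument concludes. The sub-case $\mu(N+H)=1$ is the main obstacle of the whole proposition, since $\overline{N}^H$ can then carry positive $\mu$-mass and \Prop{qe} alone is inadequate. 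I expect to handle it by exploiting the $H$-openness of $A$ together with the quasi-continuity of $\sd_H(\cdot,N)$ (guaranteed by \Lem{Lip} in this regime) and \Cor{cap01} applied to the $H$-invariant hulls of suitable subsets of $N$, so as to squeeze $A\cap\overline{N}^H$ between sets of vanishing capacity.
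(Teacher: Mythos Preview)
Your proof of (ii) is incomplete: in the sub-case $\mu(N+H)=1$ you give only an ``expectation'', not an argument, and this is exactly where the real difficulty sits. The paper's own proof of (ii) is \emph{also} flawed---it applies (i) to $A^{H\text{-ext}}$ and then invokes the identity $A=(A^{H\text{-ext}})^{H\text{-ext}}$ from Definition~2.5, but that identity is false (take $A=B_H(0,1)\setminus\{0\}$). The appended erratum withdraws (ii) and (iii) as stated, replacing them by ``$A\in\mathfrak{O}(E)\Rightarrow A$ quasi-open'' (and the corresponding statement for complements), where $\mathfrak{O}(E)$ is the class of sets equal, up to a $\Cp_{1,2}$-null set, to $\{f>0\}$ for some $\fM(E)$-measurable $H$-continuous $f$; the author explicitly leaves open whether every $H$-open $A\in\fM(E)$ belongs to $\mathfrak{O}(E)$. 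Your device of passing to a Borel envelope $M\supset E\setminus A$ runs into precisely this obstruction: the residual $A\cap\overline{N}^H$ is not controllable by Proposition~2.7 or Corollary~2.8 alone, and no repair is currently known.

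Separately, your reduction of (i) to (ii) goes in the wrong direction. Part~(i) admits a correct self-contained proof, and the paper gives it: since $A$ is $H$-open, $\sd_H(\cdot,A)$ can be written as a countable infimum over a dense subset $H_0\subset H$, hence is $\fM(E)$-measurable; being also $H$-Lipschitz, it is quasi-continuous by Lemma~2.9, and $A^{H\text{-ext}}=\{\sd_H(\cdot,A)>0\}$ is quasi-open. The analogous trick fails for $\sd_H(\cdot,E\setminus A)$ because $E\setminus A$ is $H$-closed rather than $H$-open---which is why (ii) is genuinely harder than (i). For the application in Section~3 the erratum notes that $H$-open $H$-convex sets do satisfy $A=(A^{H\text{-ext}})^{H\text{-ext}}$, so the main theorem is unaffected; but (ii)--(iii) in the generality stated remain open.
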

\begin{proof}
(i): If $\Cp_{1,2}(\overline{A}^H)=0$, then the assertion is clear.
We assume $\Cp_{1,2}(\overline{A}^H)>0$.
Choose a countable dense subset $H_0$ of $H$.
From the $H$-openness of $A$, $\sd_H(z,A)=\inf\{|h|_H\cdot 1_{A-h}(z)\mid h\in H_0\}$ for each $z\in E$.
Thus, $\sd_H(\cdot,A)$ is $\fM(E)$-measurable. 
Let $E_0=\{z\in E\mid \sd_H(z,A)<\infty\}$.
Since $E_0$ is $H$-invariant and $E_0\supset \overline{A}^H$, $\Cp_{1,2}(E\setminus E_0)=0$ from \Cor{cap01}. 
In particular, $\mu(E_0)=1$. Therefore, $\sd_H(\cdot,A)$ satisfies the definition of $H$-Lipschitz functions. From \Lem{Lip}, it is quasi-continuous.
Since $A^{H\text{-ext}}=\{z\in E\mid \sd_H(z,A)>0\}$, $A^{H\text{-ext}}$ is quasi-open. 

(ii): By applying (i) to the $H$-open set $A^{H\text{-ext}}$ and by using the identity $A=(A^{H\text{-ext}})^{H\text{-ext}}$, we conclude that $A$ is quasi-open.

(iii): It is sufficient to apply (ii) to $E\setminus A$.
\end{proof}
The following is an improvement on \Lem{Lip}.
\begin{prop}\label{prop:Hqe}
If an $\fM(E)$-measurable function $f$ on $E$ is $H$-continuous, then $f$ is quasi-continuous.
\end{prop}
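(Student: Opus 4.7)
The plan is to reduce the proof to showing that the preimage of every open subset of $\R$ under $f$ is quasi-open, and then invoke Proposition~\ref{prop:m}(ii) together with a countable base for the topology of $\R$.

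First, I would dispose of the set where $f$ is not under control. Let $E_0$ be the $H$-invariant set of full $\mu$-measure on which $f$ is finite and $h\mapsto f(z+h)$ is continuous in $H$, as provided by Definition~\ref{def:HL}. Since $E_0$ is $H$-invariant, so is its complement; in particular $E\setminus E_0$ is $H$-open, and Proposition~\ref{prop:qe} gives $\Cp_{1,2}(E\setminus E_0)=0$. I would therefore replace $f$ by its modification obtained by setting $f\equiv 0$ on $E\setminus E_0$, which still belongs to $\fM(E)$ and which agrees with the original $f$ q.e., so that quasi-continuity of the modification implies that of $f$.

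Next, for every open $V\subset\R$, I would verify that $f^{-1}(V)$ is both $\fM(E)$-measurable (immediate from measurability of $f$) and $H$-open. To check $H$-openness of $f^{-1}(V)$ at a point $z\in f^{-1}(V)$: if $z\in E_0$, then $z+H\subset E_0$ by $H$-invariance, and the continuity of $h\mapsto f(z+h)$ at $0$ yields $\delta>0$ with $\BBH(z,\delta)\subset f^{-1}(V)$; if instead $z\notin E_0$, then $f(z)=0\in V$, and since $E\setminus E_0$ is $H$-invariant the entire $H$-flat through $z$ lies where $f\equiv 0\in V$. Proposition~\ref{prop:m}(ii) then gives that $f^{-1}(V)$ is quasi-open.

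Finally, I would assemble the quasi-continuous structure. Fix $\eps>0$ and choose a countable base $\{V_n\}_{n\ge 1}$ of the topology of $\R$ (e.g.\ open intervals with rational endpoints). Quasi-openness of $f^{-1}(V_n)$ provides an open $O_n$ with $\Cp_{1,2}(O_n)<\eps/2^n$ such that $f^{-1}(V_n)\cup O_n$ is open in $E$. Put $O=\bigcup_n O_n$; by countable subadditivity $\Cp_{1,2}(O)<\eps$, and for any open $V=\bigcup_k V_{n_k}$ the set $f^{-1}(V)\cup O=\bigcup_k\bigl(f^{-1}(V_{n_k})\cup O_{n_k}\bigr)\cup O$ is open, so $f^{-1}(V)\cap(E\setminus O)$ is relatively open in $E\setminus O$. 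Hence $f|_{E\setminus O}$ is continuous, and $f$ is quasi-continuous.

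The only conceptually delicate point is in the first step: $f$ is a priori arbitrary on $E\setminus E_0$, which, if left untreated, would destroy $H$-openness of $f^{-1}(V)$. Once this is neutralized by the capacity-zero modification afforded by Proposition~\ref{prop:qe}, everything else is a routine rearrangement of quasi-openness into pointwise continuity off a set of arbitrarily small capacity, and no Lipschitz approximation or appeal to the martingale/Banach--Saks machinery of Lemma~\ref{lem:Lip} is needed beyond what is already built into Proposition~\ref{prop:m}.
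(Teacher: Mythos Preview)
Your argument is a detailed elaboration of the paper's original one-line proof (``This is clear from \Cor{cap01} and \Prop{m}''): you make explicit the modification of $f$ on $E\setminus E_0$, the $H$-openness of each $f^{-1}(V)$, and the countable-base assembly. In that sense you have recovered exactly the intended original argument.

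However, the paper itself issues an Erratum declaring precisely this route invalid. The problem is upstream, in \Prop{m}(ii): its proof rests on the identity $A=(A^{H\text{-ext}})^{H\text{-ext}}$ for $H$-open $A$, stated parenthetically in \Defn{H}, and that identity is false (the Erratum gives $A=B_H(0,1)\setminus\{0\}$ as a counterexample). With it gone, the original proof of \Prop{m}(ii) collapses, and the Erratum explicitly leaves open whether every $H$-open set in $\fM(E)$ is quasi-open. Since your argument applies \Prop{m}(ii) as a black box to the sets $f^{-1}(V)$---which need not satisfy either sufficient condition (Suslin complement, or $A=(A^{H\text{-ext}})^{H\text{-ext}}$) the Erratum later isolates---it inherits this gap. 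Your closing remark that ``no Lipschitz approximation \ldots\ is needed beyond what is already built into \Prop{m}'' is thus exactly backwards.

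The Erratum repairs \Prop{Hqe} by bypassing \Prop{m} entirely and going back to \Lem{Lip}. One forms the $H$-Lipschitz inf-convolutions
\[
  f_n(z)=\inf_{h\in H_0}\{f(z+h)+n|h|_H\},
\]
which are $\fM(E)$-measurable (the infimum is over a countable dense $H_0$) and increase to $f$ by $H$-continuity. Then $\{f>a\}=\bigcup_n\{f_n>a\}$ is quasi-open by \Lem{Lip}, and likewise $\{f<a\}$. Your final countable-base step then goes through unchanged. In the corrected logical order, \Prop{m}(ii)(iii) are \emph{derived from} \Prop{Hqe}, not the other way around.
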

\begin{proof}
This is clear from \Cor{cap01} and \Prop{m}.
\end{proof}
\begin{rem}
From the proof, we can replace $W^{1,2}(E)$ by $W^{1,p}(E)$ in \Thm{rademacher} and $\Cp_{1,2}$ by $\Cp_{1,p}$ in \Prop{qe} and \Cor{cap01} for any $p\in (1,\infty)$. 
Here, $W^{1,p}(E)$ is the first order $L^p$-Sobolev space on $E$ in terms of Malliavin calculus, and $\Cp_{1,p}$ is the associated capacity.
(See \cite{Ma} for example, where symbol $\D^p_1$ is used in place of $W^{1,p}$.)
Moreover, \Lem{Lip}, \Prop{m}, and \Prop{Hqe} are valid, even if quasi-notions are interpreted in terms of $\Cp_{1,p}$.
\end{rem}
\section{Proof of \Thm{main}}
In this section, we prove \Thm{main}.
We assume that $U\in \fM(E)$ satisfies the assumptions of \Thm{main}: $\mu(U)>0$, $U$ is $H$-open and $H$-convex.
For a subset $F$ of $E$ and subset $A$ of $F$, we denote the closure, interior, and boundary of $A$ with respect to the relative topology of $F$ by $\overline{A}^F$, $A^{F\text{-int}}$, and $\partial^F A$, respectively.
Although these terminologies are slightly inconsistent with the corresponding ones in \Defn{H}, we use them as long as there is no ambiguity.

Let us recall that $K$ was taken and fixed as a dense subspace of $H$ in Section~2.
We also fix an increasing sequence $\{G_n\}_{n=1}^\infty$ of finite-dimensional subspaces of $K$ such that $\bigcup_{n=1}^\infty G_n$ is dense in $H$.
We also define $G_n^\perp$, $P_{G_n}$, $Q_{G_n}$, $\mu_{G_n^\perp}$, and $\mu_{G_n}$ as in the previous section.
For a finite-dimensional subspace $G$ of $K$ and $x\in G^\perp$, $\mu_{x+G}$ denotes a measure on $x+G$ that is defined as the induced measure of $\mu_G$ by the canonical map from $G$ to $x+G$.

The following is a consequence of the basic theory of convex analysis; it is proved in the same way as in \cite[Lemma~4.7]{Hi10}.
\begin{lem}\label{lem:section}
Let $G$ be a finite-dimensional subspace of $H$ and $a\in E$.
Define $F=a+G$.
If $U\cap F\ne\emptyset$, then $U^{H\text{\rm -int}}\cap F=(U\cap F)^{F\text{\rm -int}}$, $\overline{U}^H\cap F=\overline{U\cap F}^F$, and $(\partial^H U)\cap F=\partial^F(U\cap F)$.
\end{lem}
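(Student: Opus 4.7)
My plan is to reduce the three $H$-topological identities to standard convex-analysis statements about the finite-dimensional open convex set $U\cap F$ inside the affine slice $F=a+G$, whose relative topology coincides with the Euclidean topology on $G$ inherited from $|\cdot|_H$.

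First I would collect two preliminary observations. (a) $U\cap F$ is $F$-convex and $F$-open: convexity follows because $(U-a)\cap G=((U-a)\cap H)\cap G$ is convex in $G$; $F$-openness follows because, for each $x\in U\cap F$, $H$-openness of $U$ provides $\eps>0$ with $x+B_H(0,\eps)\subset U$, and $x+(B_H(0,\eps)\cap G)$ is an $F$-neighbourhood of $x$. (b) $E\setminus U$ is $H$-closed: for every $z\in U$, $H$-openness gives $\sd_H(z,E\setminus U)>0$, so $\overline{E\setminus U}^H\subseteq E\setminus U$. In particular $\partial^H U=\overline{U}^H\setminus U$ and $U^{H\text{-int}}=U$.

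The main technical step is the closure identity $\overline{U}^H\cap F=\overline{U\cap F}^F$. The inclusion ``$\supseteq$'' is immediate because $F$-convergence is just $H$-convergence along a sequence in $F$. For ``$\subseteq$'', I fix $z\in\overline{U}^H\cap F$ and an interior witness $x_0\in U\cap F$ with $x_0+B_H(0,\eps_0)\subset U$. Given $\lambda\in(0,1)$, by definition of $\overline{U}^H$ I pick $z_k\in U$ with $z_k-z\in H$ and $|z_k-z|_H<\lambda\eps_0/(1-\lambda)$, and absorb the $H$-error into the ball around $x_0$ by setting $h_k=-\lambda^{-1}(1-\lambda)(z_k-z)\in B_H(0,\eps_0)$, so that $x_0+h_k\in U$. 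Then both $z_k-z$ and $x_0+h_k-z$ lie in $(U-z)\cap H$, and $H$-convexity of $U$ at base point $z$ forces their convex combination
\[
(1-\lambda)(z_k-z)+\lambda(x_0+h_k-z)=\lambda(x_0-z)
\]
to lie in $(U-z)\cap H$; equivalently, $(1-\lambda)z+\lambda x_0\in U\cap F$. Letting $\lambda\to0^+$ approximates $z$ in $F$ by points of $U\cap F$.

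Once the closure identity is in hand, the remaining two are formal: $U^{H\text{-int}}\cap F=U\cap F=(U\cap F)^{F\text{-int}}$ by (b) and (a); and intersecting $\partial^H U=\overline{U}^H\setminus U$ with $F$ and comparing with $\partial^F(U\cap F)=\overline{U\cap F}^F\setminus(U\cap F)$ (valid because $U\cap F$ is $F$-open) yields the boundary identity. The chief obstacle is the closure step: because $E$ is infinite-dimensional and the $H$-topology is not metrizable, a sequence $(z_k)\subset U$ converging to $z$ in $|\cdot|_H$ need not lie in $F$ and cannot simply be projected onto $F$ while remaining in $U$. The correction term $h_k$, available precisely because $U$ is $H$-open at the interior point $x_0$, is what upgrades an $H$-approximation of $z$ inside $U$ into a genuine $F$-approximation inside $U\cap F$.
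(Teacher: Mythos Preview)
Your proof is correct and follows essentially the same route as the paper. The non-trivial inclusion $\overline{U}^H\cap F\subset\overline{U\cap F}^F$ is handled in both cases by the standard convex-analysis trick of pushing an $H$-closure point $z$ along the segment toward an interior point and absorbing the $H$-error into a ball around that interior point; the paper simply cites Rockafellar's Theorem~6.1 for this step, while you spell out the absorption $h_k=-\lambda^{-1}(1-\lambda)(z_k-z)$ explicitly.

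The one structural difference is that you invoke the standing assumption that $U$ is $H$-open to get $U^{H\text{-int}}=U$ and $\partial^H U=\overline{U}^H\setminus U$, which trivialises the interior identity. The paper instead proves $U^{H\text{-int}}\cap F\supset(U\cap F)^{F\text{-int}}$ directly by a similar homothety argument, so its proof would go through for any $H$-convex $U$ with $U^{H\text{-int}}\cap F\ne\emptyset$, not just $H$-open ones. Under the hypotheses actually in force in Section~3 your shortcut is perfectly legitimate and arguably cleaner.
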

\begin{proof}
Select $y$ from $U^{H\text{-int}}\cap F$.
There exists $\dl>0$ such that $B_H(y,\dl)\subset U^{H\text{-int}}$.

First, we show that $U^{H\text{-int}}\cap F\supset (U\cap F)^{F\text{-int}}$.
Take $x$ from $(U\cap F)^{F\text{-int}}$.
There exists $s>0$ such that $w:=(1+s)x-sy\in (U\cap F)^{F\text{-int}}$.
Then, $B_H(x,\frac{s\dl}{1+s})=\frac1{1+s}w+\frac{s}{1+s}B_H(y,\dl)\subset U$, that is, $x\in U^{H\text{-int}}$.
Since $x\in F$, we obtain $x\in U^{H\text{-int}}\cap F$.

Next, we show that $\overline{U}^H\cap F\subset\overline{U\cap F}^F$.
Take $x\in\overline{U}^H\cap F$. Then,
\[
 \bigcup_{t\in(0,1]}\bigl((1-t)x+t(B_H(y,\dl)\cap F)\bigr)\subset(U\cap F)^{F\text{-int}}
\]
(cf.\ \cite[Theorem~6.1]{Ro}), and $x$ is an accumulation point in $F$ of the left-hand side.
Therefore, $x\in\overline{U\cap F}^F$.

Both the converse inclusions are evident. The last identity follows from the first two identities.
\end{proof}
\begin{lem}\label{lem:Kz}
There exist a compact subset $V_0$ of $U$ and $r>0$ such that $\mu(V_0)>0$ and $V_0+\BBH(4r)\subset U$.
\end{lem}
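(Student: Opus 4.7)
The plan is to produce $V_0$ as a compact subset of the measurable set
\[
V_\rho := \{z \in U : z + B_H(\rho) \subset U\}
\]
for a suitably small $\rho > 0$, and then to set $r := \rho/5$, so that $V_0 + \BBH(4r) \subset V_0 + B_H(\rho) \subset U$. By $H$-openness, $V_\rho \uparrow U$ as $\rho \downarrow 0$, so once $V_\rho \in \fM(E)$ is established, monotone convergence forces $\mu(V_\rho) > 0$ for some $\rho$, and inner regularity of the Radon measure $\mu$ then yields a compact $V_0 \subset V_\rho$ with $\mu(V_0) > 0$.

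The nontrivial step is the measurability of $V_\rho$, since a priori the defining condition $z + B_H(\rho) \subset U$ is an uncountable intersection of translates $U - h$. The key identity I will prove is
\[
V_\rho \,=\, U \cap \bigcap_{n=1}^{\infty} \bigcap_{h \in D_{n,\rho}} (U - h),
\]
where $D_{n,\rho}$ is a fixed countable dense subset of $B_{G_n}(\rho) := \{h \in G_n : |h|_H < \rho\}$. Each $U - h$ with $h \in H$ lies in $\fM(E)$ by quasi-invariance of $\mu$ under Cameron--Martin shifts, so the right-hand side is a countable intersection of $\mu$-measurable sets and hence in $\fM(E)$. The inclusion $\subset$ is immediate.

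For the reverse inclusion, take $z$ in the right-hand side and set $C_z := (U - z) \cap H$. This is convex by $H$-convexity, contains $0$ since $z \in U$, and is open in $H$ by $H$-openness of $U$ applied at every $z + h \in U$. Applying \Lem{section} with $F = z + G_n$ shows that $C_z \cap G_n$ is a nonempty convex open subset of $G_n$ containing $D_{n,\rho}$. In the finite-dimensional $G_n$, any nonempty convex open set $O$ satisfies $\operatorname{int}(\overline{O}) = O$, whence
\[
B_{G_n}(\rho) = \operatorname{int}\bigl(B_{G_n}(\rho)\bigr) \subset \operatorname{int}\bigl(\overline{D_{n,\rho}}\bigr) \subset \operatorname{int}\bigl(\overline{C_z \cap G_n}\bigr) = C_z \cap G_n.
\]
Thus $C_z \supset \bigcup_n B_{G_n}(\rho)$, which is dense in $B_H(\rho)$. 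Applying the same interior-of-closure identity to the nonempty convex open set $C_z$ itself, now in $H$, upgrades this to $B_H(\rho) \subset C_z$, i.e., $z + B_H(\rho) \subset U$, so $z \in V_\rho$.

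The hard part is precisely this measurability claim: $H$-convexity of the sections is what lets one pass from ``$C_z$ contains a countable dense subset'' to ``$C_z$ contains the full ball'', while $H$-openness both makes $C_z$ open (justifying $\operatorname{int}(\overline{C_z}) = C_z$) and guarantees that $V_\rho$ exhausts $U$.
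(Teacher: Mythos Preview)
Your proof is correct and takes a genuinely different route from the paper's.

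The paper works with the function $\varphi(z)=\sd_H(z,U^{H\text{-ext}})$: after a preliminary reduction (intersecting $U$ with a suitable half-space so that $\mu(U^{H\text{-ext}})>0$), it observes that $\varphi$ is $\fM(E)$-measurable because $U^{H\text{-ext}}$ is itself $H$-open, hence $\sd_H(\cdot,U^{H\text{-ext}})$ can be computed as an infimum over a countable dense subset of $H$ (this is the mechanism behind \Prop{m}~(i)). One then takes $V_0$ compact inside $\{\varphi\ge 5r\}$ and uses the triangle inequality for $\sd_H$ together with the identity $U=\{\varphi>0\}$, i.e.\ $U=(U^{H\text{-ext}})^{H\text{-ext}}$; the latter is where $H$-convexity enters, as the erratum makes explicit.

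Your argument bypasses the distance function and the reduction step entirely. You express $V_\rho$ as a countable intersection of Cameron--Martin translates $U-h$, and the substantive step is the upgrade from ``$C_z$ contains a countable dense subset of $B_H(\rho)$'' to ``$C_z\supset B_H(\rho)$'', which you obtain from $\operatorname{int}(\overline{C_z})=C_z$ for the nonempty convex open set $C_z$. This makes the role of $H$-convexity completely transparent and avoids any appeal to measurability of $\sd_H(\cdot,U^{H\text{-ext}})$ or to the identity $U=(U^{H\text{-ext}})^{H\text{-ext}}$. Two minor remarks: your appeal to \Lem{section} is correct but not really needed, since openness of $C_z\cap G_n$ in $G_n$ (indeed of $C_z$ in $H$) follows immediately from $H$-openness of $U$; and your translates $U-h$ lie in $\fM(E)$ by Cameron--Martin quasi-invariance, as you say. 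The paper's approach has the advantage that, modulo the subtle point addressed in the erratum, it formally does not use $H$-convexity, whereas your argument uses it in an essential way---but since the lemma is only applied under the standing hypotheses of \Thm{main}, this costs nothing here.
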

\begin{proof}
In the proof, we do not use the $H$-convexity of $U$.
By taking an open set $O$ of $E$ with $0<\mu(\overline{O}^H)<\mu(U)$ and considering $U\setminus \overline{O}^H$ instead of $U$, we may assume $\mu(U^{H\text{-ext}})>0$.
Define $\ph(z)=\sd_H(z,U^{H\text{-ext}})$ for $z\in E$. Then, $\ph$ is $\fM(E)$-measurable from the proof of \Prop{m}~(i).
Since $U=\{\ph>0\}$, we can take $r>0$ such that $\mu(\{\ph\ge5r\})>0$.
Take a compact subset $V_0$ of $\{\ph\ge5r\}$ such that $\mu(V_0)>0$. 
These satisfy the required conditions.
\end{proof}
Hereafter, $V_0$ and $r$ always denote those in \Lem{Kz}.
We define $V=V_0+\BBH(r)$. 
Note that $V$ is compact and 
\begin{equation}\label{eq:V}
V+\BBH(3r)\subset U.
\end{equation}
\begin{rem}\label{rem:1}
\begin{enumerate}
\item We have $\mu(\partial^H U)=0$.
Indeed, let $x\in P_{G_n}(V)$. 
From \Lem{section}, 
\begin{equation}\label{eq:slice}
(\partial^H U)\cap(x+G_n)=\partial^{x+G_n}(U\cap (x+G_n)). 
\end{equation}
Since $U\cap (x+G_n)$ is convex in $x+G_n$, the right-hand side of \Eq{slice} is a null set with respect to the Lebesgue measure on $x+G_n$, i.e., $\mu_{x+G_n}$-null.
By integrating over $P_{G_n}(V)$, $(\partial^H U)\cap(V+G_n)$ is proved to be a $\mu$-null set.
Since $\mu(E\setminus (V+G_n))\to0$ as $n\to\infty$ from \Prop{ergodicity}, we obtain $\mu(\partial^H U)=0$.
\item Similarly, we can prove that if $U$ is a convex set with nonempty interior in $E$, then the topological boundary of $U$ is a $\mu$-null set.
\end{enumerate}
\end{rem}
\begin{defn}
Let $G$ be a subspace of $H$. For $z\in E$ and $s>0$, we define
\[
B_G(z,s)=\{z+h\mid h\in G,\ |h|_H< s\}\text{ and }
\overline{B}_G(z,s)=\{z+h\mid h\in G,\ |h|_H\le s\}.
\]
\end{defn}
We often omit $z$ from the notation if $z=0$.

Let $W_0$ be a subspace of $W^{1,2}(U)$, defined as follows:
\begin{equation}\label{eq:W1}
W_0=\left\{f\in W^{1,2}(U)\left|\begin{array}{l}
\text{$f$ is bounded on $U$ and $f=0$ $\mu$-a.e.\ on}\\
\text{$U\setminus\left(V+\overline{B}_{G_R}(R)\right)$ for some $R\in\N$}
\end{array}\right.\!\!\!
\right\}.
\end{equation}
\begin{lem}\label{lem:dense1}
Space $W_0$ is dense in $W^{1,2}(U)$.
\end{lem}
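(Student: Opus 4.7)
The plan is a two-step approximation: first truncate $f$ to be bounded, then multiply by a cutoff localizing in $V+\overline{B}_{G_R}(R)$. For the truncation step, given $f\in W^{1,2}(U)$ and $N\in\N$, setting $\Phi_N(t):=(t\wedge N)\vee(-N)$ and applying \Prop{derivation}~(ii) yields $\Phi_N(f)\in W^{1,2}(U)$ with $D(\Phi_N(f))=\Phi_N'(f)\,Df$; dominated convergence in $L^2(U)$ and in $L^2(U\to H)$ then gives $\Phi_N(f)\to f$ in $W^{1,2}(U)$. Hence it suffices to approximate any bounded $f\in W^{1,2}(U)\cap L^\infty(U)$ with $|f|\le M$ by elements of $W_0$.

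For the localization, for each integer $R\ge 2r$ I would set $A_R:=V_0+\overline{B}_{G_R}(R/2)$ (compact as a sum of compacts) and
\[
\chi_R(z):=\max\!\bigl(0,\,1-r^{-1}\sd_H(z,A_R)\bigr).
\]
Since $A_R$ is Suslin with positive $\mu$-measure (as $A_R\supset V_0$), \Lem{distance} combined with \Thm{rademacher} gives $\chi_R\in W^{1,2}(E)\cap L^\infty(E)$ with $0\le\chi_R\le 1$ and $|D\chi_R|_H\le r^{-1}$ $\mu$-a.e. The key geometric point is the support inclusion $\{\chi_R>0\}\subset A_R+\overline{B}_H(r)\subset V+\overline{B}_{G_R}(R)$: for $z\in A_R+\overline{B}_H(r)$, write $z=v_0+g+h$ with $v_0\in V_0$, $g\in\overline{B}_{G_R}(R/2)$, $h\in\overline{B}_H(r)$, and split $h=P_{G_R}(h)+Q_{G_R}(h)$ orthogonally in $H$; since $|P_{G_R}(h)|_H,|Q_{G_R}(h)|_H\le r$, the summand $v_0+P_{G_R}(h)$ lies in $V_0+\overline{B}_H(r)=V$, and $g+Q_{G_R}(h)$ lies in $G_R$ with $H$-norm at most $R/2+r\le R$. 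Therefore $\chi_R f\in W_0$ by \Prop{derivation}~(iii).

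To verify $\chi_R f\to f$ in $W^{1,2}(U)$, I would observe that $\{\chi_R=1\}\supset A_R$ and $\bigcup_R A_R=V_0+\bigcup_R G_R$ is $\bigcup_R G_R$-invariant and of positive $\mu$-measure, so \Prop{ergodicity} gives it full $\mu$-measure. Hence $\chi_R\to 1$ $\mu$-a.e., so $\chi_R f\to f$ in $L^2(U)$ and $\chi_R\,Df\to Df$ in $L^2(U\to H)$ by dominated convergence. For the remaining term, \Prop{derivation}~(i) confines $D\chi_R$ to $T_R:=\{0<\chi_R<1\}\subset(A_R+\overline{B}_H(r))\setminus A_R$; both $\mu(A_R)$ and $\mu(A_R+\overline{B}_H(r))$ tend to $1$ (the latter being contained in $V+\overline{B}_{G_R}(R)$, whose union over $R$ is $V+\bigcup_R G_R$, of full measure by the same argument), so $\mu(T_R)\to 0$ and $\|f D\chi_R\|_{L^2(U\to H)}^2\le (M/r)^2\mu(T_R)\to 0$. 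The product rule $D(\chi_R f)=\chi_R\,Df+f\,D\chi_R$ from \Prop{derivation}~(iii) then gives the desired convergence in $W^{1,2}(U)$.

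The main obstacle will be designing $\chi_R$ so that it is supported in the \emph{non-cylindrical} set $V+\overline{B}_{G_R}(R)$ and yet has gradient-energy $\|f D\chi_R\|_{L^2}$ tending to zero. My approach exploits the $H$-thickness $\overline{B}_H(r)$ built into $V=V_0+\overline{B}_H(r)$ by \Lem{Kz}: this slack absorbs the discrepancy between an $H$-distance cutoff (which \Lem{distance} produces cleanly) and the required $G_R$-directional support, while the transition region shrinks in measure by the same ergodic exhaustion via \Prop{ergodicity}.
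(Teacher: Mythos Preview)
Your proof is correct and follows essentially the same construction as the paper: both reduce to bounded $f$, both use the cutoff $\chi_R(z)=\bigl(1-r^{-1}\sd_H(z,V_0+\overline{B}_{G_R}(\rho_R))\bigr)\vee0$ (the paper takes $\rho_R=R$, you take $\rho_R=R/2$, an immaterial choice), both rely on \Lem{Kz}, \Lem{distance}, \Thm{rademacher} and \Prop{ergodicity}, and both check the support inclusion $\{\chi_R>0\}\subset V+\overline{B}_{G_R}(R)$ via $V=V_0+\overline{B}_H(r)$.

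The only substantive difference is in the convergence step. The paper merely verifies that $\{\|f\xi_n\|_{W^{1,2}(U)}\}$ is bounded and then invokes the Banach--Saks theorem together with $\xi_n\to1$ $\mu$-a.e.\ to conclude that Ces\`aro means of a subsequence converge to $f$. You instead prove \emph{direct} convergence $\chi_R f\to f$ in $W^{1,2}(U)$ by isolating the gradient error $f\,D\chi_R$, localizing it via \Prop{derivation}~(i) to the shell $T_R\subset(A_R+\overline{B}_H(r))\setminus A_R$, and showing $\mu(T_R)\to0$ from the ergodicity argument. Your route is slightly more elementary (no Banach--Saks) and yields convergence of the full sequence rather than of Ces\`aro means; the paper's route is shorter on the page since it skips the shell-measure computation. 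Both are entirely standard.
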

\begin{proof}
Since $W^{1,2}(U)\cap L^\infty(U)$ is dense in $W^{1,2}(U)$, it is sufficient to prove that each function in $W^{1,2}(U)\cap L^\infty(U)$ can be approximated by functions in $W_0$.
Take $f\in W^{1,2}(U)\cap L^\infty(U)$ and let $M=\|f\|_{L^\infty(U)}$.
From \Prop{ergodicity}, $\lim_{n\to\infty}\mu\left(V_0+\overline{B}_{G_n}(n)\right)=1$.
For each $n\in\N$, define 
\[
\xi_n(z)=\left(1-r^{-1}\sd_H(z,V_0+\overline{B}_{G_n}(n))\right)\vee0,
\quad z\in E.
\]
From \Thm{rademacher} and \Prop{derivation}, $\xi_n\in W^{1,2}(E)$ and $|D\xi_n|_H\le 1/r$ $\mu$-a.e.
In addition, $0\le \xi_n\le1$ on $E$, $\xi_n=0$ on $E\setminus\left(V+\overline{B}_{G_n}(n)\right)$, and $\xi_n=1$ on $V_0+\overline{B}_{G_n}(n)$.
From \Prop{derivation}, $f\xi_n\in W_0$ and
\begin{align*}
\|f\xi_n\|_{W^{1,2}(U)}^2
&\le 2M^2\|\xi_n\|_{W^{1,2}(U)}^2+2\|f\|_{W^{1,2}(U)}^2+M^2\\
&\le 2M^2(r^{-2}+1)+2\|f\|_{W^{1,2}(U)}^2+M^2,
\end{align*}
which is bounded in $n$. Therefore, the Ces\`aro means of a certain subsequence of $\{f\xi_n\}_{n=1}^\infty$ converges in $W^{1,2}(U)$. Since $\xi_n\to 1$ $\mu$-a.e.\ as $n\to\infty$, the limit function is $f$.
\end{proof}
Hereafter, we fix a function $f$ in $W_0$ and write $G$ for $G_R$ in \Eq{W1}.
For the proof of \Thm{main}, it is sufficient to prove that $f$ is approximated by elements in $W^{1,2}(E)|_U$.
For this purpose, we first construct a partition of unity.

Since $Q_G(V)$ is compact in $G$, we can take a finite number of points $a_1,a_2,\dots,a_S$ from $Q_G(V)$ such that $Q_G(V)\subset \bigcup_{i=1}^S B_G(a_i,r)$ for some $S\in\N$.
For $i=1,\dots, S$, define 
\[
A_i=Q_G^{-1}(\BBG(a_i,r))\cap V\]
and 
\[
\psi_i(z)=\left(1-r^{-1}\sd_H(z,A_i+G)\right)\vee0,
\quad z\in E.
\]
Then, each $A_i$ is compact in $E$, $V+G\subset\bigcup_{i=1}^S (A_i+G)$, $\sum_{i=1}^S \psi_i(z)\ge 1$ for $z\in V+G$, and $\psi_i(z)=0$ for $z\in E\setminus\left(A_i+G+B_H(r)\right)$ for each $i$.

We take a real-valued nondecreasing smooth function $\Ph$ on $\R$ such that $\Ph(0)=0$ and $\Ph(t)=1$ for $t\ge1$.
Define
\[
\ph_1=\Phi(\psi_1),\ \ph_j=\Phi\left(\sum_{i=1}^j\psi_i\right)-\Phi\left(\sum_{i=1}^{j-1}\psi_i\right)\text{ for } j=2,\dots,S.
\]
For each $j$, $\ph_j$ is $H$-Lipschitz, $0\le \ph_j\le1$ on $E$, and $\ph_j=0$ on $E\setminus\left(A_j+G+B_H(r)\right)$.
Moreover, $\sum_{j=1}^S \ph_j=1$ on $V+G$.
Thus, $f\ph_j|_U\in W^{1,2}(U)\cap L^\infty(U)$ for each $j$ and $f=\sum_{j=1}^S f\ph_j$ on $U$.
Therefore, in order to prove \Thm{main}, it is sufficient to prove that each $f\ph_j|_U$ can be approximated by elements in $W^{1,2}(E)|_U$.

We fix $j$ and write $g$ for $f\ph_j|_U$.
\begin{lem}\label{lem:g}
We have $\{g\ne0\}\subset A_j+B_H(r)+\BBG(R')$,
where $R'>0$ is taken such that it is large enough to satisfy 
$Q_G(V-A_j)+B_G(R+r)\subset \overline{B}_G(R')$.
\end{lem}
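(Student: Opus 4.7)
The plan is to chase elements through the two constraints that force $g = f\varphi_j|_U$ to be nonzero and then rewrite the decomposition so the finite-dimensional $G$-component is visibly bounded. First I would fix a representative of $f$ so that $\{f\ne 0\}\subset V+\overline{B}_G(R)$ holds pointwise (possible since the defining condition for $W_0$ is $f=0$ $\mu$-a.e.\ off this set, and modifying on a null set does not change $g$ in $L^2$ or $W^{1,2}$). Combined with $\varphi_j=0$ outside $A_j+G+B_H(r)$, this gives
\[
  \{g\ne 0\}\subset \bigl(V+\overline{B}_G(R)\bigr)\cap\bigl(A_j+G+B_H(r)\bigr).
\]

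Now pick an arbitrary $z$ in that intersection and write simultaneously $z=v+\rho$ with $v\in V$, $\rho\in\overline{B}_G(R)$, and $z=a+h+k$ with $a\in A_j$, $h\in G$, $k\in H$, $|k|_H<r$. The key step is to identify $h$: since $h\in G$, we have $h=Q_G h=Q_G(z-a-k)=Q_G(v-a)+\rho-Q_G k$, using that $\rho\in G$. Because $Q_G$ is a contraction on $H$, $|Q_G k|_H<r$, so $\rho-Q_G k\in B_G(R+r)$, and hence
\[
  h\in Q_G(V-A_j)+B_G(R+r)\subset \overline{B}_G(R'),
\]
by the defining choice of $R'$. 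Thus $z=a+k+h\in A_j+B_H(r)+\overline{B}_G(R')$.

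For the existence of such an $R'$, note that $V$ is compact in $E$ and $Q_G\colon E\to G$ is continuous, so $Q_G(V)$ is compact, hence $|\cdot|_H$-bounded in the finite-dimensional space $G$; consequently $Q_G(V-A_j)\subset Q_G(V)-Q_G(A_j)$ is bounded, and $R'$ can indeed be chosen finite.

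The only subtle point is the initial cleanup of $f$ to a genuine pointwise representative vanishing off $V+\overline{B}_G(R)$; once that is in place, everything reduces to a short affine/projection computation in $E=G^\perp\dotplus G$, and the $H$-convexity of $U$ plays no role in this particular lemma (it will be used later, not here).
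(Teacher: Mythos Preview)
Your argument is correct and essentially identical to the paper's own proof: both start from $\{g\ne0\}\subset(V+\overline{B}_G(R))\cap(A_j+G+B_H(r))$, pick a point in the intersection with its two decompositions, and apply $Q_G$ to pin down the $G$-component inside $\overline{B}_G(R')$. Your extra remarks about fixing a representative of $f$ and about the finiteness of $R'$ are valid clarifications that the paper leaves implicit.
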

\begin{proof}
By the definition of $g$, we have
$\{g\ne0\}\subset(V+\BBG(R))\cap(A_j+G+B_H(r))$.
Take an element $z$ from the right-hand side.
Then, $z$ is described as 
\[
z=z_1+y_1=z_2+y_2+h,
\]
where $z_1\in V$, $y_1\in \BBG(R)$, $z_2\in A_j$, $y_2\in G$, and $h\in B_H(r)$. Then,
\[
y_2=Q_G(z_1-z_2+y_1-h)
\in Q_G(V-A_j)+B_G(R+r)
\subset \overline{B}_G(R').
\]
This completes the proof.
\end{proof}
We set 
\begin{equation}\label{eq:Y}
Y=A_j+B_H(r)+\BBG(R'+1),
\end{equation}
which belongs to $\fM(E)$ and is relatively compact as well as $H$-open.
(See Figure~\ref{fig:1}.)
\begin{figure}[htb]
\begin{center}
\begin{overpic}[scale=1]{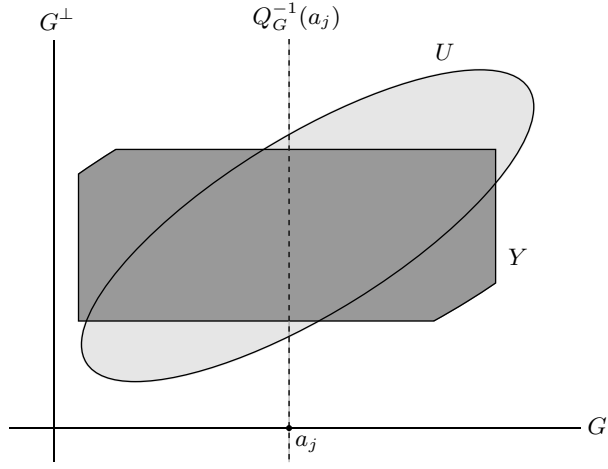}
\put(95,5){\small$G$}
\put(5,72){\small$G^\perp$}
\put(40,73){\small$Q_G^{-1}(a_j)$}
\put(47,3){\small$a_j$}
\put(82,33){\small$Y$}
\put(70,67){\small$U$}
\end{overpic}
\caption{Illustration of $U$ and $Y$ etc.}
\label{fig:1}
\end{center}
\end{figure}
We define 
\[
Y'=(Y+B_H(r))\cap U \quad\text{and}\quad X=\bigl((Q_G^{-1}(a_j)\cap U)+B_G(R'')\bigr)\cap U
\]
with $R''=R'+1+3r$.
\begin{lem}
It holds that $Y\cap U\subset Y'\subset X$.
\end{lem}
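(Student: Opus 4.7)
The plan is to prove the two inclusions separately, unwinding the definitions of $Y$, $Y'$, and $X$ and relying on two crucial facts proved earlier: the thickening estimate $V+\BBH(3r)\subset U$ from \Eq{V}, and the containment $A_j\subset V$ with $Q_G(A_j)\subset\BBG(a_j,r)$ built into the definition of $A_j$.

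For $Y\cap U\subset Y'$ the argument is trivial: since $0\in B_H(r)$, we have $Y\subset Y+B_H(r)$, hence $Y\cap U\subset (Y+B_H(r))\cap U=Y'$. The real content lies in $Y'\subset X$. I would take $z\in Y'$ and, unfolding $Y+B_H(r)=A_j+B_H(r)+\BBG(R'+1)+B_H(r)$, write
\[
z=a+h_1+y+h_2,\qquad a\in A_j,\ h_1,h_2\in B_H(r),\ y\in\BBG(R'+1).
\]
Set $g:=Q_G(z)-a_j\in G$; the goal is to show $g\in B_G(R'')$ and $z-g\in Q_G^{-1}(a_j)\cap U$, so that $z=(z-g)+g\in X$ (using also that $z\in U$).

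For the norm of $g$, decompose $Q_G(z)-a_j=(Q_G(a)-a_j)+Q_G(h_1)+y+Q_G(h_2)$. Each of the terms $|Q_G(a)-a_j|_H$, $|Q_G(h_1)|_H$, $|Q_G(h_2)|_H$ is at most $r$, with strict inequality for the latter two because $h_1,h_2$ lie in the \emph{open} ball $B_H(r)$, while $|y|_H\le R'+1$. Adding gives $|g|_H<R'+1+3r=R''$, so $g\in B_G(R'')$. For $z-g$, the identity $z-g=P_G(z)+a_j=a+(a_j-Q_G(a))+P_G(h_1)+P_G(h_2)$ shows that $z-g=a+k$ with $a\in V$ and $|k|_H\le|a_j-Q_G(a)|_H+|P_G(h_1)|_H+|P_G(h_2)|_H<3r$, hence $z-g\in V+\BBH(3r)\subset U$. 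Since clearly $Q_G(z-g)=a_j$, we conclude $z-g\in Q_G^{-1}(a_j)\cap U$.

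I don't anticipate a real obstacle: the statement is essentially a bookkeeping lemma designed so that the constant $R''=R'+1+3r$ exactly absorbs the three $r$-contributions from the two $B_H(r)$ perturbations and the $Q_G$-width of $A_j$ around $a_j$, while the thickening $V+\BBH(3r)\subset U$ is exactly large enough to keep $z-g$ inside $U$ after shifting by an element of $G$ of $H$-norm $<3r$. The only point that requires care is preserving the strict inequality in $|g|_H<R''$, which is why the two occurrences of $B_H(r)$ (rather than $\BBH(r)$) in the definitions of $Y$ and $Y'$ matter.
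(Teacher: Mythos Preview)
Your proof is correct and follows essentially the same approach as the paper. The only cosmetic difference is that the paper first combines the two $B_H(r)$-perturbations into a single element $h_1\in B_H(2r)$ and then splits $z$ as $(z_1-y_2+P_G h_1)+(y_2+Q_G h_1+y_1)$ with $y_2:=Q_G(z_1)-a_j$, whereas you keep the two perturbations separate and phrase the same decomposition as $z=(z-g)+g$ with $g=Q_G(z)-a_j$; the arithmetic and the use of $V+\BBH(3r)\subset U$ are identical.
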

\begin{proof}
The first inclusion is evident.
To prove the second inclusion, choose $z$ from $Y'$. Then, we can write $z=z_1+h_1+y_1$ for some $z_1\in A_j=Q_G^{-1}(\overline{B}_G(a_j,r))\cap V$, $h_1\in B_H(2r)$, and $y_1\in \overline{B}_G(R'+1)$.
There exists $y_2\in\overline{B}_G(r)$ such that $z_1-y_2\in Q_G^{-1}(a_j)$. 
Since $Q_G(P_G h_1)=0$, $|P_G h_1|_H<2r$, and $|Q_G h_1|_H<2r$, $z$ is decomposed as
\[
z=(z_1-y_2+P_G h_1)+(y_2+Q_G h_1+y_1),
\]
where 
\[
z_1-y_2+P_G h_1
\in Q_G^{-1}(a_j)\cap\bigl(V+\overline{B}_G(r)+B_H(2r)\bigr)\\
\subset Q_G^{-1}(a_j)\cap U
\quad\text{(from \Eq{V})}
\]
and
\[
y_2+Q_G h_1+y_1
\in \overline{B}_G(r)+B_G(2r)+\overline{B}_G(R'+1)
\subset B_G(R'+1+3r).
\]
Since $z\in U$, we conclude that $z\in X$.
\end{proof}
Let $\gm\in(0,1/2]$. We define a map $T_{\gm}\colon E\to E$ as
\begin{align*}
T_{\gm}(z)&:=P_G(z)+(1-\gm)Q_G(z)+\gm a_j\\
&=z+\gm(a_j-Q_G(z)).
\end{align*}
Then, for any $w\in E$, $T_{\gm}(w+G)=w+G$ and $T_{\gm}|_{w+G}$ is a homothety on $w+G$ that is centered at $P_G(w) + a_j$ with a magnification ratio $1-\gm$.

From a simple calculation, the induced measure of $\mu$ by the map $T_{\gm}$, denoted by $\mu\circ T_{\gm}^{-1}$, is absolutely continuous with respect to $\mu$, and the Radon--Nikodym derivative $d(\mu\circ T_{\gm}^{-1})/d\mu$ is uniformly bounded in $\gm$ on $Q_G^{-1}(C)$ for any compact set $C$ of $G$.

Let $X_\gm:=T_\gm^{-1}(X)$.
From the definitions, $X$ and $X_\gm$ are $H$-convex and belong to $\fM(E)$.
Therefore, $X$ and $X_\gm$ are moderate, and we can consider the function spaces $W^{1,2}(X)$ and $W^{1,2}(X_\gm)$.
We also note that $X\subset X_\b\subset X_\gm$ if $0<\b<\gm$. 
We define a function $g_\gm$ on $X_\gm$ by 
\[
g_\gm(z)=g(T_\gm(z))\quad\text{for } z\in X_\gm.
\]
Then, for a sufficiently small $\gm$, 
\begin{equation}\label{eq:ggm}
\{g_\gm\ne0\}\subset Y
\end{equation}
by \Lem{g} and \Eq{Y}.
Hereafter, we consider only such a small $\gm$, say, in the interval $(0,\gm_0]$ for some $\gm_0>0$.

The following lemma is intuitively evident; nonetheless, we have provided the proof.
\begin{lem}\label{lem:appendix}
Function $g_\gm$ belongs to $W^{1,2}(X_{\gm})$.
Moreover, $g_\gm|_X$ converges to $g|_X$ in $W^{1,2}(X)$ as $\gm\downarrow0$.
\end{lem}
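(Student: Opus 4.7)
My plan is to decompose the argument into two parts: first verify $g_\gm \in W^{1,2}(X_\gm)$ via a chain-rule computation for the affine map $T_\gm$, then establish the $W^{1,2}(X)$-convergence by reducing to a finite-dimensional slicing argument in the $G$-direction.

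\textbf{Membership in $W^{1,2}(X_\gm)$.} Since $X\subset U$, restriction yields $g\in W^{1,2}(X)$ from $g\in W^{1,2}(U)\cap L^\infty(U)$. For any $h\in K$, set $\tilde h := h-\gm Q_G h\in K$; a direct computation gives $T_\gm(z+sh) = T_\gm(z)+s\tilde h$. Hence the absolutely continuous representative of $g$ along $\tilde h$-lines in $X$ induces one of $g_\gm$ along $h$-lines in $X_\gm$, modulo a Fubini-type transfer of the exceptional null set from $\mu_{\tilde h^\perp}$ on $\tilde h^\perp$ to $\mu_{h^\perp}$ on $h^\perp$; this transfer uses that $\mu\circ T_\gm^{-1}$ has a density with respect to $\mu$ that is uniformly bounded on $Q_G^{-1}(C)$ for compact $C\subset G$. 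The chain rule then yields
\[
\partial_h g_\gm(z) = \langle (Dg)(T_\gm z),\, h-\gm Q_G h\rangle_H \quad\mu\text{-a.e.},
\]
so $Dg_\gm := (Dg)\circ T_\gm - \gm Q_G((Dg)\circ T_\gm)\in L^2(X_\gm\to H)$ is the required $H$-valued gradient, and $\|g_\gm\|_{W^{1,2}(X_\gm)}$ is uniformly bounded in $\gm\in(0,\gm_0]$.

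\textbf{Convergence on $X$.} The displacement $T_\gm z-z=\gm(a_j-Q_G z)\in G$, so $T_\gm$ restricted to the slice $P_G z+G$ is the affine contraction $y'\mapsto(1-\gm)y'+\gm a_j$. Writing $\mu=\mu_{G^\perp}\otimes\mu_G$, Fubini places $g(x',\cdot)$ and $Dg(x',\cdot)$, for $\mu_{G^\perp}$-a.e.\ $x'\in G^\perp$, in $L^2$ over the finite-dimensional open convex slice $(U-x')\cap G$. The two required convergences
\[
\|g_\gm-g\|_{L^2(X)}\to 0 \quad\text{and}\quad \|(Dg)\circ T_\gm-Dg\|_{L^2(X\to H)}\to 0
\]
then reduce, by dominated convergence in $x'$, to slicewise $L^2$-continuity of composition with the finite-dimensional affine contraction $(1-\gm)y'+\gm a_j$: this is standard, via density of continuous functions in $L^2(G,\mu_G)$ combined with the uniform R--N derivative bound for the push-forward. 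Together with the trivial $O(\gm)$ bound on $\gm Q_G((Dg)\circ T_\gm)$ in $L^2$, this yields $g_\gm|_X\to g|_X$ in $W^{1,2}(X)$.

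\textbf{Main obstacle.} The subtlest step is the Fubini-type transfer in the membership argument: the AC property holds $\mu_{\tilde h^\perp}$-a.e.\ on $\tilde h^\perp$, but we need it $\mu_{h^\perp}$-a.e.\ on the different orthogonal complement $h^\perp$. This is handled by the observation that $\tilde h - h = -\gm Q_G h$ lies in the finite-dimensional space $G$, so the correspondence between the Gaussian slice measures $\mu_{\tilde h^\perp}$ and $\mu_{h^\perp}$ factors through an explicit finite-dimensional change of variables with bounded Jacobian, whence null sets are preserved.
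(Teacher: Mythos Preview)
Your proposal is correct and follows essentially the same route as the paper: the same chain rule via $\tilde h=(I-\gm Q_G)h$ yielding $Dg_\gm=(I-\gm Q_G)\bigl((Dg)\circ T_\gm\bigr)$, the same transfer of the a.e.-line AC property through the absolute continuity of $\mu\circ T_\gm^{-1}$ (the paper makes this explicit by computing the point $x'\in k^\perp$ and shift $b$ corresponding to each $x\in h^\perp$), and the same slicewise-in-$G$ argument for $L^2$ convergence of $g_\gm$ and $(Dg)\circ T_\gm$. The only notable difference is the final wrap-up: you conclude $Dg_\gm\to Dg$ in $L^2(X\to H)$ directly from $(Dg)\circ T_\gm\to Dg$ plus the $O(\gm)$ term, whereas the paper takes a slightly roundabout path via boundedness, weak compactness, and norm convergence---your way is cleaner.
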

\begin{proof}
First, we prove that $g_\gm\in W^{1,2}(X_{\gm})$ and 
\begin{equation}\label{eq:Dg}
Dg_\gm=(I-\gm Q_G)\bigl((Dg)\circ T_\gm\bigr),
\end{equation}
where $I$ denotes the identity operator on $H$.
Since 
\begin{equation}\label{eq:density}
\int_{X_\gm} g_\gm^2\,d\mu=\int_X g^2\,d(\mu\circ T_{\gm}^{-1})
\le\int_Y g^2\left\|\frac{d(\mu\circ T_{\gm}^{-1})}{d\mu}\right\|_{L^\infty(Y)}d\mu<\infty,
\end{equation}
we obtain $g_\gm\in L^2(X_\gm)$.
Similarly, we have $(I-\gm Q_G)\bigl((Dg)\circ T_\gm\bigr)\in L^2(X_\gm\to H)$.
Take any $h\in K\setminus\{0\}$ and define $k=(I-\gm Q_G)h\in K\setminus\{0\}$.
For $\mu_{k^\perp}$-a.e.\,$x\in k^\perp$, there exists an absolutely continuous version $\tilde g_k(x,\cdot)$ of $g_k(x,\cdot)$ such that
\[
  \la(Dg)(x+sk),k\ra=\frac{\partial\tilde g_k}{\partial s}(x,s)
  \quad\text{for a.e.\,}s\in I_{x,k}^X.
\]
For $x\in h^\perp$, 
\[
T_\gm(x+sh)=x+sh+\gm(a_j-Q_G(sh))=x+\gm a_j+sk
\]
and 
\[
\la x+\gm a_j,k\ra=\la x+\gm a_j,h-\gm Q_G h\ra
=-\gm\la Q_G x-(1-\gm)a_j,h\ra.
\]
Therefore, by letting 
\[
b={\gm\la Q_G x-(1-\gm)a_j,h\ra}/|k|_H^2
\quad\text{and}\quad
x'=x+\gm a_j+bk,
\]
we have $x'\in k^\perp$, $I_{x',k}^X+b=I_{x,h}^{X_\gm}$, and $\tilde g_k(x',\cdot-b)$ is an absolutely continuous version of $(g_\gm)_h(x,\cdot)$ on $I_{x,h}^{X_\gm}$.
Moreover,
\begin{align*}
\frac{\partial}{\partial s}\tilde g_k(x',s-b)
&=\la (Dg)(x'+(s-b)k),k\ra\\
&=\la(Dg)(T_\gm(x+sh)),(I-\gm Q_G)h\ra\\
&=\left\la (I-\gm Q_G)\bigl((Dg)(T_\gm(x+sh))\bigr),h\right\ra.
\end{align*}
This implies that $g_\gm\in W^{1,2}(X_\gm)$ and $Dg_\gm=(I-\gm Q_G)\bigl((Dg)\circ T_\gm\bigr)$.

Next, we prove that $g_\gm|_X$ converges to $g|_X$ in $W^{1,2}(X)$ as $\gm\downarrow0$.
For $\mu_{G^\perp}$-a.e.\,$x\in G^\perp$, the convergence of $g_\gm|_{(x+G)\cap X}$ to $g|_{(x+G)\cap X}$ in $L^2((x+G)\cap X,\mu_{x+G}|_{(x+G)\cap X})$ is proved in a standard way as follows. 
 For $x\in G^\perp$, define 
 \[
 g^*(z)=\begin{cases}g(z)&\text{if $z\in (x+G)\cap X$,}\\0&\text{if $z\in (x+G)\setminus X$.}\end{cases}
 \]
For $\mu_{G^\perp}$-a.e.\,$x$, $g^*$ belongs to $L^2(x+G,\mu_{x+G})$. 
 Let $\Psi$ be a smooth function on $G$ with compact support and $\int_{G}\Psi(y)\,\lm_m(dy)=1$, where $m=\dim G$ and $\lm_m$ denotes the Lebesgue measure on $G$.
 For each $\eps>0$, define a smooth function $\psi_\eps$ on $x+G$ by $\psi_\eps(z)=\eps^{-m}\int_{G}g^*(z-y)\Psi(\eps^{-1}y)\,dy$.
 Then, denoting $L^2((x+G)\cap X,\mu_{x+G}|_{(x+G)\cap X})$ by $L^2((x+G)\cap X)$, we have
\begin{align}
 &\|g_\gm|_{(x+G)\cap X}-g|_{(x+G)\cap X}\|_{L^2((x+G)\cap X)}\nonumber\\
 &\le \|g_\gm|_{(x+G)\cap X}-(\psi_\eps\circ T_\gm)|_{(x+G)\cap X}\|_{L^2((x+G)\cap X)}\nonumber\\
&\phantom{{}\le}+\|(\psi_\eps\circ T_\gm)|_{(x+G)\cap X}-\psi_\eps|_{(x+G)\cap X}\|_{L^2((x+G)\cap X)}\nonumber\\
&\phantom{{}\le}+\|\psi_\eps|_{(x+G)\cap X}-g|_{(x+G)\cap X}\|_{L^2((x+G)\cap X)}.
\label{eq:3e}
\end{align}
The last term of \Eq{3e} converges to $0$ as $\eps\downarrow0$, as does the first term of \Eq{3e} by using an estimate similar to \Eq{density}.
From the dominated convergence theorem, the second term converges to $0$ as $\gm\downarrow0$.
Therefore, by letting $\gm\downarrow0$ and $\eps\downarrow0$,
$g_\gm|_{(x+G)\cap X}$ converges to $g|_{(x+G)\cap X}$ in $L^2((x+G)\cap X,\mu_{x+G}|_{(x+G)\cap X})$.
By integrating $\|g_\gm|_{(x+G)\cap X}-g|_{(x+G)\cap X}\|_{L^2((x+G)\cap X)}^2$ over $G^\perp$ with respect to $\mu_{G^\perp}(dx)$ and by using the dominated convergence theorem, we obtain  $\|g_\gm|_X-g|_X\|_{L^2(X)}^2\to 0$ as $\gm\downarrow0$.

Similarly, we can show that 
\begin{equation}\label{eq:Dggm}
\bigl((Dg)\circ T_\gm\bigr)|_X\to (Dg)|_X\quad \text{in }L^2(X\to H)\text{ as }\gm\downarrow0.
\end{equation}
From \Eq{Dg}, $\{Dg_\gm|_X\}_{\gm\in(0,\gm_0]}$ is bounded in $L^2(X\to H)$.
Therefore, $\{g_\gm|_X\}_{\gm\in(0,\gm_0]}$ is bounded in $W^{1,2}(X)$, and it is weakly relatively compact.
Since any accumulation point should be $g|_X$, $g_\gm|_X$ converges weakly to $g|_X$ in $W^{1,2}(X)$.
Since $\lim_{\gm\downarrow0}\|g_\gm|_X\|_{W^{1,2}(X)}=\|g|_X\|_{W^{1,2}(X)}$ in view of \Eq{Dg} and \Eq{Dggm}, we conclude that $g_\gm|_X$ converges to $g|_X$ in $W^{1,2}(X)$ as $\gm\downarrow0$.
\end{proof}
We extend the defining set of $g_\gm$ to $X_\gm\cup U$ by letting $g_\gm(z)=0$ for $z\in U\setminus X_\gm$.
Since
\[
\{g_\gm\ne0\}\cap U\subset Y\cap U\subset(Y+B_H(r))\cap U=Y'\subset X\subset X_\gm,
\]
we have $g_\gm|_U\in W^{1,2}(U)$.
\begin{lem}\label{lem:key}
  It holds that $\overline{Y\cap U}^H\subset X_{\gm}$.
\end{lem}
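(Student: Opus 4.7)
The plan is as follows. Given $z \in \overline{Y \cap U}^H$, pick $\{z_n\}_{n\in\N} \subset (Y \cap U) \cap (z+H)$ with $|z_n - z|_H \to 0$. For the candidate decomposition of $T_\gm(z) = u + y$ I take
\[
u := P_G(z) + a_j, \qquad y := T_\gm(z) - u = (1-\gm)(Q_G(z) - a_j) \in G,
\]
so $Q_G(u) = a_j$ automatically. Showing $T_\gm(z) \in X$ then reduces to three claims: (a) $u + \BBH(r) \subset U$ (a strengthened form of $u \in U$ that will supply an $H$-interior buffer), (b) $|y|_H < R''$, and (c) $T_\gm(z) \in U$.

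For (a), decompose each $z_n \in Y = A_j + B_H(r) + \BBG(R'+1)$ as $z_n = a_n + h_n + y_n$; since $A_j \subset V \cap Q_G^{-1}(\BBG(a_j, r))$, a direct computation yields
\[
u_n := P_G(z_n) + a_j = a_n + (a_j - Q_G(a_n)) + P_G(h_n) \in V + \BBH(2r).
\]
Because $P_G$ is bounded on $E$ and $z_n \to z$ in $E$, $u_n \to u$ in $E$, and the $E$-compactness of $V + \BBH(2r)$ (sum of two $E$-compact sets) then forces $u \in V + \BBH(2r)$; \Eq{V} gives $u + \BBH(r) \subset V + \BBH(3r) \subset U$. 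For (b), the same decomposition yields $|Q_G(z_n) - a_j|_H \leq 2r + R' + 1$, so by $H$-continuity of $Q_G$ and $\gm > 0$,
\[
|y|_H = (1-\gm)|Q_G(z) - a_j|_H < 2r + R' + 1 < R''.
\]

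The substantive step is (c). Set $C := (U - u) \cap H$; this is convex by $H$-convexity of $U$, and by (a) it contains $\BBH(r)$, so $0$ is in the $H$-interior of $C$. Since $z_n - z \in H$ and $z - u = Q_G(z) - a_j \in G \subset H$, I have $z_n - u \in H$, and combined with $z_n \in U$, $z_n - u \in C$; thus $z - u \in \overline{C}^H$. I then invoke the elementary convex-analysis fact that if $C \subset H$ is convex with an $H$-ball around $0$, then $(1-\gm)\overline{C}^H \subset C$ for every $\gm \in (0,1]$; applied to $z - u$, this gives $T_\gm(z) - u = (1-\gm)(z - u) \in C$, i.e., $T_\gm(z) \in U$. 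The main obstacle is this last step: $H$-convexity of $U$ would by itself only place $T_\gm(z)$ in $\overline{U}^H$, and it is the interior buffer $u + \BBH(r) \subset U$, ultimately coming from \Eq{V}, that makes the contraction by $(1-\gm)$ pull $T_\gm(z)$ strictly inside $U$.
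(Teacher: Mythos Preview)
Your proof is correct. Both arguments hinge on the same two observations: the homothety center $u=P_G(z)+a_j$ lies well inside $U$ (with an $H$-ball of radius $r$ to spare, via \Eq{V}), and the classical convex-analysis fact that contracting a closure point toward an interior point of a convex set lands strictly inside. The execution differs, however. The paper restricts to the finite-dimensional slice $z+G$, invokes \Lem{section} to pass from $z\in\overline{U}^H$ to $z\in\overline{U\cap(z+G)}^{z+G}$, and then, rather than showing $T_\gm(z)\in X$ directly, shows $T_\dl(z)\in Y'$ for sufficiently small $\dl$ and concludes via $T_\dl^{-1}(Y')\subset X_\dl\subset X_\gm$. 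You instead work in $H$ with the translated convex set $C=(U-u)\cap H$, and verify the membership $T_\gm(z)\in X$ in one shot through the explicit decomposition $u+y$. Your route is a bit more economical---it bypasses \Lem{section} and the detour through $Y'$ and $X_\dl$---at the modest price of the $E$-compactness argument in step~(a) (which could in fact be replaced by the $H$-estimate $|u_n-u|_H=|P_G(z_n-z)|_H\le|z_n-z|_H\to0$, giving $u\in V+\BBH(2r)$ directly).
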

\begin{proof}
Let $z\in \overline{Y\cap U}^H$.
Then, $z$ is described as
\[
  z=z_1+h_1+y_1+h_2
\]
for some $z_1\in Q_G^{-1}(\overline{B}_G(a_j,r))\cap V$, $h_1\in B_H(r)$, $y_1\in\overline{B}_G(R'+1)$, and $h_2\in B_H(r)$.
Then, $z\in V+ B_H(2r)+G\subset U+G$.
Therefore, $U\cap (z+G)\ne\emptyset$.

From \Lem{section}, we have ${\overline{U}^H\cap(z+G)}=\overline{U\cap(z+G)}^{z+G}$.
Since $z\in{\overline{U}^H\cap(z+G)}$, we have $z\in\overline{U\cap(z+G)}^{z+G}$.
Moreover, there exists $y_2\in\overline{B}_G(r)$ such that $z_1-y_2\in Q_G^{-1}(a_j)$.
Then,
\begin{align*}
z&=(z_1-y_2+P_G(h_1+h_2))+(Q_G(h_1+h_2)+y_1+y_2)\\
&\in \left(Q_G^{-1}(a_j)\cap\left(V+\overline{B}_G(r)+B_H(2r)\right)\right)+\left(B_G(2r)+\overline{B}_G(R'+1)+\overline{B}_G(r)\right)\\
&\subset \left(Q_G^{-1}(a_j)\cap U\right)+G\quad\text{(from \Eq{V})}.
\end{align*}
Therefore, $\bigl(U\cap(z+G)\bigr)\cap Q_G^{-1}(a_j)\ne\emptyset$.
Combining this with the facts that $U\cap(z+G)$ is convex in $z+G$ and $z\in\overline{U\cap(z+G)}^{z+G}$, we obtain $T_{\dl}(z)\in U\cap(z+G)\subset U$ for all $\dl\in(0,\gm]$.
Furthermore, if $\dl$ is sufficiently small, we have $|T_{\dl}(z)-z|_H<r$, which implies that $T_{\dl}(z)\in Y+B_H(r)$.
Therefore, $T_{\dl}(z)\in Y'$ for such $\dl$.
This implies that $z\in  T_{\dl}^{-1}(Y')\subset X_\dl\subset X_{\gm}$.
\end{proof}
From this lemma, $\overline{Y\cap U}^H\cap (E\setminus X_{\gm})=\emptyset$. 
Since both $Y\cap U$ and $X_{\gm}$ are $H$-open and belong to $\fM(E)$, both $\overline{Y\cap U}^H\,(=E\setminus(Y\cap U)^{H\text{-ext}})$ and $E\setminus X_{\gm}$ are quasi-closed from \Prop{m}.

Let $m\in\N$. 
We can take open subsets $O_{m,1}$ and $O_{m,2}$ of $E$ such that $\Cp_{1,2}(O_{m,i})<1/m$ $(i=1,2)$ and both $\overline{Y\cap U}^H\setminus O_{m,1}$ and $(E\setminus X_{\gm})\setminus O_{m,2}$ are closed in $E$.
Since $\Cp_{1,2}$ is tight (see, e.g., \cite{RS92}), there exists an open set $O_{m,3}$ such that $\Cp_{1,2}(O_{m,3})<1/m$ and $E\setminus O_{m,3}$ is compact.
Let $O_m=O_{m,1}\cup O_{m,2}\cup O_{m,3}$.
We define $C_{m}=\overline{Y\cap U}^H\setminus O_{m}$ and $C'_{\gm,m}=(E\setminus X_\gm)\setminus O_m$.
Since both sets are compact and $C_{m}\cap C'_{\gm,m}=\emptyset$, $\sd_E(C_{m},C'_{\gm,m})=:\a>0$.
Define $C''_{\gm,m}=C'_{\gm,m}+\{z\in E\mid |z|_E\le \a/2\}$ and
\[
\rho_{\gm,m}(z)=\frac{\sd_E(z,C''_{\gm,m})}{\sd_E(z,C_{m})+\sd_E(z,C''_{\gm,m})},\quad
z\in E.
\]
Then, $\rho_{\gm,m}$ is $H$-Lipschitz, $0\le\rho_{\gm,m}\le1$ on $E$, $\rho_{\gm,m}=1$ on $C_{m}$, and $\rho_{\gm,m}=0$ on $C''_{\gm,m}$.

We note that $g_\gm|_{X_\gm}\in W^{1,2}(X_\gm)\cap L^\infty(X_\gm)$ and $1-e_{O_m}=0$ on $O_m$, where $e_{O_m}$ is the equilibrium potential of $O_m$.
Moreover, $X_\gm$, $O_m$, and $C'_{\gm,m}+\{z\in E\mid |z|_E<\a/2\}$ are all $H$-open sets, and their union is equal to $E$.
Therefore, $g_{\gm,m}:=g_\gm\cdot(1-e_{O_m})\cdot\rho_{\gm,m}$ is well-defined as an element of $W^{1,2}(E)\cap L^\infty(E)$ and $g_{\gm,m}=g_\gm\cdot(1-e_{O_m})$ on $U$ from \Eq{ggm}.
Then, 
\[
\|g-g_{\gm,m}|_U\|_{W^{1,2}(U)}\le\|g-g_\gm|_U\|_{W^{1,2}(U)}+\|g_\gm|_U-g_{\gm,m}|_U\|_{W^{1,2}(U)}
\]
and
\begin{align}
&\|g_\gm|_U-g_{\gm,m}|_U\|_{W^{1,2}(U)}^2 \nonumber\\
&=\|g_\gm|_U-g_\gm(1-e_{O_m})|_U\|_{W^{1,2}(U)}^2\nonumber\\
&=\|(g_\gm e_{O_m})|_U\|_{W^{1,2}(U)}^2\nonumber\\
&\le 2\|((Dg_\gm)e_{O_m})|_U\|_{L^2(U)}^2+2M^2\|(De_{O_m})|_U\|_{L^2(U)}^2
+M^2\|e_{O_m}|_U\|_{L^2(U)}^2 \label{eq:conv}\\
&\to0 \quad\text{as }m\to\infty.\nonumber
\end{align}
Here, we note that the first term of \Eq{conv} converges to $0$ since $|(Dg_\gm)e_{O_m}|_H\le |Dg_\gm|_H$ and $\bigl(|(Dg_\gm)e_{O_m}|_H\bigr)|_U$ converges to $0$ in measure $\mu|_U$.

By combing these estimates with \Lem{appendix}, $\lim_{\gm\downarrow0}\lim_{m\to\infty}\|g-g_{\gm,m}|_U\|_{W^{1,2}(U)}=0$.
That is, $g$ can be approximated in $W^{1,2}(U)$ by elements of $W^{1,2}(E)|_U$.
This completes the proof of \Thm{main}.
\section{Concluding remarks}
Let $U$ be the same as in \Thm{main}.
\begin{enumerate}
\item Feyel and \"Ust\"unel~\cite{FU00} proved the following logarithmic Sobolev inequality on $U$.
\begin{thm}[cf.\ {\cite[Theorem~6.4]{FU00}}]\label{th:LS}
For any $f\in\FCb^1(E)$,
\begin{equation}\label{eq:logsob}
  \mint_U f^2\log\left(f^2\left/\mint_U f^2\,d\mu\right.\right)d\mu \le 2 \mint_U |Df|_H^2\,d\mu.
\end{equation}
Here, $\mint_U \cdots\, d\mu:=\mu(U)^{-1}\int_U \cdots\, d\mu$ denotes
the normalized integral on $U$.
\end{thm}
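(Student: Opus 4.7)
The plan is to reduce the claim to a finite-dimensional logarithmic Sobolev inequality for standard Gaussian conditioned to a convex set, and then pass to the limit along finite-dimensional sections. Since $f\in\FCb^1(E)$ depends on only finitely many linear functionals, choose $n\in\N$ large enough that $f(z)=\ph(Q_{G_n}z)$ for some $\ph\in C_b^1(G_n)$; then $Df\in G_n\subset H$ and $|Df|_H=|\nabla_{G_n}\ph|$ pointwise. Via the decomposition $\mu=\mu_{G_n^\perp}\otimes\mu_{G_n}$, the fibers $U_x:=(U-x)\cap G_n$ $(x\in G_n^\perp)$ are convex subsets of $G_n$ by the $H$-convexity of $U$ combined with $G_n\subset H$; hence, whenever $\ro(x):=\mu_{G_n}(U_x)>0$, the fiber probability measure $\nu_x:=\mu_{G_n}|_{U_x}/\ro(x)$ is standard Gaussian on $G_n$ conditioned to the convex set $U_x$.

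This conditioned Gaussian satisfies a logarithmic Sobolev inequality with constant $2$. Indeed, $\nu_x$ has density proportional to $e^{-V}$ with $V=|\cdot|_H^2/2+\chi_{U_x}$ (the convex indicator) satisfying $\nabla^2 V\ge I$, so either a smooth approximation of $\chi_{U_x}$ coupled with the Bakry--\'Emery $\Gm_2$ criterion, or Caffarelli's contraction theorem (giving a $1$-Lipschitz Brenier map from $\mu_{G_n}$ to $\nu_x$), yields
\[
\mathrm{Ent}_{\nu_x}(f^2)\le 2\int_{U_x}|\nabla_{G_n}\ph|^2\,d\nu_x.
\]
Writing $\pi:=\ro\cdot\mu_{G_n^\perp}/\mu(U)$ and $F_n(x):=\int f^2\,d\nu_x$, the entropy decomposition along the conditioning $P_{G_n}$ gives
\[
\mathrm{Ent}_{\mu(U)^{-1}\mu|_U}(f^2)=\int_{G_n^\perp}\mathrm{Ent}_{\nu_x}(f^2)\,d\pi(x)+\mathrm{Ent}_\pi(F_n),
\]
and combining with the fiberwise LSI produces
\[
\mint_U f^2\log\bigl(f^2\bigl/\mint_U f^2\,d\mu\bigr)\,d\mu\le 2\mint_U|Df|_H^2\,d\mu+\mathrm{Ent}_\pi(F_n).
\]

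It then remains to drive the residual $\mathrm{Ent}_\pi(F_n)$ to zero as $n\to\infty$. The function $F_n\circ P_{G_n}$ is the conditional expectation of $f^2$ given $\sg(P_{G_n})$ under $\mu(U)^{-1}\mu|_U$, and as $\bigcup_n G_n$ is dense in $H$ the $\sg$-algebras $\sg(P_{G_n})$ decrease to the tail $\sg$-algebra, which is $\mu$-trivial by Kolmogorov's zero-one law applied to the independent increments of $\mu$, hence also trivial under $\mu(U)^{-1}\mu|_U\ll\mu$. The reverse martingale convergence theorem together with the uniform $L^\infty$-bound on $f$ shows $F_n\circ P_{G_n}\to\mint_U f^2\,d\mu$ in every $L^p$, forcing $\mathrm{Ent}_\pi(F_n)\to 0$; since the gradient term on the right is independent of $n$ for $n$ large, the inequality follows. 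The main obstacle is the fiberwise LSI with the singular potential $\chi_{U_x}$: one must either smooth $\chi_{U_x}$ by $1$-uniformly convex functions while preserving the LSI constant in the limit, or invoke Caffarelli's theorem---both are standard but constitute the analytic crux of the argument.
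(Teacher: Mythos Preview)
The paper does not give its own proof of this statement; it is quoted from Feyel--\"Ust\"unel \cite[Theorem~6.4]{FU00} and mentioned only to point out that, via \Thm{main}, the inequality extends from $\FCb^1(E)|_U$ to all of $W^{1,2}(U)$. There is thus no in-paper argument to compare your proposal against.

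That said, your outline is a correct reconstruction of a proof. The disintegration along $P_{G_n}$, the fiberwise log-Sobolev inequality for Gaussian measure conditioned on a convex set (via Bakry--\'Emery after a $1$-convex regularisation of $\chi_{U_x}$, or via Caffarelli's contraction theorem), and the entropy chain rule $\mathrm{Ent}_{\tilde\mu}(f^2)=\int\mathrm{Ent}_{\nu_x}(f^2)\,d\pi+\mathrm{Ent}_\pi(F_n)$ are all standard and correctly invoked. The reverse-martingale step is also sound: $\bigcap_n\sg(P_{G_n})$ is $\mu$-trivial because any set in it is independent of each $\sg(Q_{G_n})$ and hence of $\fB(E)=\sg(Q_{G_n};\,n\in\N)$, and triviality passes to $\mu(U)^{-1}\mu|_U\ll\mu$; boundedness of $f$ then yields $\mathrm{Ent}_\pi(F_n)\to0$ by dominated convergence. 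One minor point to tidy up: writing $f=\ph\circ Q_{G_n}$ presupposes that the functionals $l_1,\dots,l_m$ defining $f$ lie in some $G_n$, so either choose the filtration $\{G_n\}$ to contain them from the outset or insert a preliminary approximation---both are routine. Your approach coincides in spirit with that of \cite{FU00}, which likewise proceeds through finite-dimensional sections and the conditioned-Gaussian LSI.
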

(A more general result is proved in \cite[Theorem~6.4]{FU00}.)
\Thm{main} implies that \eqref{eq:logsob} is also true for all $f\in W^{1,2}(U)$ from the approximation procedure. 
\item We consider a Markov process associated with $(\cE^U,W^{1,2}(U))$. From \cite[Theorem~2.1]{Fu00} (see also \cite{RS92}), the closure of $(\cE^U,\cF C_b^1(E)|_U)$ is a quasi-regular local Dirichlet form on $L^2(\bar U,\mu|_U)$, where $\bar U$ denotes the closure of $U$ in $E$. Therefore, there is an associated diffusion process $\{X_t\}$ on $\bar U$. 
Moreover, suppose that the indicator function of $U$ belongs to $BV(E)$, which is defined in \cite{Fu00,FH01}. Then, we have the Skorohod-type representation of $\{X_t\}$ (\cite[Theorem~4.2]{FH01}):
\[
  X_t(\omega)-X_0(\omega)
  =W_t(\omega)-\frac12\int_0^t X_s(\omega)\,ds
  +\frac12\int_0^t \sigma_U(X_s(\omega))\,dA_s^{\|D 1_U\|}(\omega),
  \quad t\ge0,
\]
where $\{W_t\}$ is the $E$-valued Brownian motion starting at $0$, $\sigma_U$ is an $H$-valued function on $E$, and $A^{\|D 1_U\|}$ is a positive continuous additive functional. $\sigma_U$ and $A^{\|D 1_U\|}$ formally correspond to the vector field normal to the boundary $U^\partial$ of $U$ and the additive functional induced by the surface measure of $U^\partial$, respectively (see \cite{FH01} for more precise descriptions).
In \cite{Fu00,FH01}, $\{X_t\}$ is called the {\em modified} reflecting Ornstein--Uhlenbeck process (for more general $U$) because the domain of the Dirichlet form is defined by the smallest extension of $\FCb^1(E)|_U$, and in general, it is not clear whether it really is a natural one. Under the assumptions of \Thm{main}, the domain is equal to $W^{1,2}(U)$, and $\{X_t\}$ can be aptly called the {\em true} reflecting Ornstein--Uhlenbeck process on $\bar U$. 
\end{enumerate}
\bigskip

\noindent
\textit{Acknowledgment.} A part of this article was written during a visit to the Scuola Normale Superiore in Pisa.  I would like to thank Professor Luigi~Ambrosio for his hospitality and insightful discussions. I would also like to thank Professors Kazuhiro~Kuwae and Michael~R\"ockner for their useful comments.





\bibliographystyle{model1-num-names}

\begin{thebibliography}{00}


\bibitem{AKR90} S. Albeverio, S. Kusuoka, and M. R\"ockner, On partial integration in infinite-dimensional space and applications to Dirichlet forms, J. London Math. Soc. (2) \textbf{42} (1990), 122--136. 
\bibitem{AR90} S. Albeverio and M. R\"ockner, Classical Dirichlet forms on topological vector spaces---closability and a Cameron--Martin formula, J. Funct. Anal. \textbf{88} (1990), 395--436.
\bibitem{Bo} V. I. Bogachev, \textit{Gaussian measures}, Mathematical Surveys and Monographs \textbf{62}, Amer. Math. Soc., Providence, RI, 1998.
\bibitem{BH} N. Bouleau and F. Hirsch, \textit{Dirichlet forms and analysis on Wiener space}, de Gruyter Studies in Mathematics \textbf{14}, Walter de Gruyter, Berlin, 1991.
\bibitem{Bou} N. Bourbaki, \textit{General topology: Chapters 5--10},
Translated from the French, Reprint of the 1966 edition, Elements of Mathematics (Berlin), Springer, Berlin, 1989.
\bibitem{DM} C. Dellacherie and P.-A. Meyer, \textit{Probabilities and Potential}, North-Holland, 1978.
\bibitem {Eb} A. Eberle, \textit{Uniqueness and non-uniqueness of semigroups generated by singular diffusion operators}, Lecture Notes in Math. \textbf{1718}, Springer-Verlag, Berlin, 1999.
\bibitem{ES93} O. Enchev and D. W. Stroock, Rademacher's theorem for Wiener functionals, Ann. Probab. {\bf 21} (1993), 25--33.
\bibitem{FU00} D. Feyel and A. S. \"Ust\"unel, The notion of convexity and concavity on Wiener space, J. Funct. Anal. \textbf{176} (2000), 400--428. 
\bibitem{Fu00} M. Fukushima, $BV$ functions and distorted Ornstein Uhlenbeck processes over the abstract Wiener space, J. Funct. Anal. {\bf 174} (2000), 227--249.
\bibitem{FH01} M. Fukushima and M. Hino, On the space of BV functions and a related stochastic calculus in infinite dimensions, J. Funct. Anal. \textbf{183} (2001), 245--268.
\bibitem{FOT} M. Fukushima, Y. Oshima and M. Takeda, \textit{Dirichlet forms and symmetric Markov processes}, de Gruyter Studies in Mathematics {\bf 19}, Walter de Gruyter, Berlin, 1994.
\bibitem{Hi03} M. Hino, On Dirichlet spaces over convex sets in infinite dimensions, in \textit{Finite and infinite dimensional analysis in honor of Leonard Gross (New Orleans, LA, 2001)}, 143--156, Contemp. Math. \textbf{317}, Amer. Math. Soc., 2003.
\bibitem{Hi10} M. Hino, Sets of finite perimeter and the Hausdorff--Gauss measure on the Wiener space, J. Funct. Anal. \textbf{258} (2010), 1656--1681.
\bibitem{Ku82} S. Kusuoka, The nonlinear transformation of Gaussian measure on Banach space and absolute continuity (I), J. Fac. Sci. Univ. Tokyo Sect. IA Math. \textbf{29} (1982), 567--597.
\bibitem{MR} Z. M. Ma and M. R\"ockner, \textit{Introduction to the theory of (nonsymmetric) Dirichlet forms}, Universitext, Springer, Berlin, 1992.
\bibitem{Ma} P. Malliavin, \textit{Stochastic analysis}, Grundlehren der Mathematischen Wissenschaften \textbf{313}, Springer-Verlag, Berlin, 1997.
\bibitem{RR05} J. Ren and M. R\"ockner, A remark on sets in infinite dimensional spaces with full or zero capacity, in \textit{Stochastic analysis: classical and quantum}, 177--186, World Sci. Publ., Hackensack, NJ, 2005.
\bibitem{Ro} R. T. Rockafellar, \textit{Convex analysis},  Princeton Mathematical Series \textbf{28}, Princeton University Press, Princeton, 1970.
\bibitem{RSc99} M. R\"ockner and A. Schied, Rademacher's theorem on configuration spaces and applications, J. Funct. Anal. \textbf{169} (1999), 325--356.
\bibitem{RS92} M. R\"ockner and B. Schmuland, Tightness of general $C\sb{1,p}$ capacities on Banach space, J. Funct. Anal. {\bf108} (1992), 1--12.
\bibitem{Us} A. S. \"Ust\"unel, \textit{Analysis on Wiener space and applications}, preprint. \texttt{arXiv:1003.1649}
\end{thebibliography}

\begin{thebibliography}{Hi11}
\bibitem[Hi11]{H11} M. Hino, Dirichlet spaces on $H$-convex sets in Wiener space, Bull.\ Sci.\ Math.\ \textbf{135} (2011), 667--683.
\end{thebibliography}



\bigskip\ \\ \bigskip
\newcommand{\fO}{\mathfrak{O}}
\begin{center}
{\Large Erratum to
``Dirichlet spaces on $H$-convex sets in Wiener space''
[Bull.\ Sci.\ Math.\ 135 (2011) 667--683]}
\end{center}
In Definition~2.5 of \cite{H11}, the statement $A^{H\text{-int}}=(A^{H\text{-ext}})^{H\text{-ext}}$ is false. A counterexample is $A=B_H(0,1)\setminus\{0\}$.
This error affects Proposition~2.10 (ii) (iii) and Proposition~2.11.
We modify their statements and proofs as follows.
First, although the original proof of Proposition~2.11 is invalid since it uses Proposition~2.10, the claim is true by the following proof.
\begin{proof}[Proof of Proposition~2.11.]
We may assume that $f$ is a real-valued function.
Take a countable dense subset $H_0$ of $H$.
For $n\in\N$, define
\[
  f_n(z)=\inf_{h\in H_0}\{f(z+h)+n|h|_H\},\quad z\in E.
\]
Then, $f_n$ is an $\fM(E)$-measurable function. From the $H$-continuity of $f$, we can replace $H_0$ by $H$ in the above definition.
Therefore, $f_n$ is $H$-Lipschitz.
Moreover, $f_n(z)$ is nondecreasing in $n$ and converges to $f(z)$ for all $z\in E$.
In particular, $\{f>a\}=\bigcup_{n\in\N}\{f_n>a\}$ for any $a\in\R$.
From Lemma~2.9, this set is quasi-open.
By replacing $f$ with $-f$, we also obtain that $\{f<a\}$ is quasi-open for any $a\in\R$.
This completes the proof.
\end{proof}

We set 
\[
\fO(E)=\left\{A\subset E\;\vrule\; \parbox{0.72\hsize}{there exists an $\fM(E)$-measurable and $H$-continuous function $f$ on $E$ such that $A=\{f>0\}$ up to $\Cp_{1,2}$-null set}\right\}.
\]
Then, we modify the statements of Proposition~2.10 (ii) and (iii) as follows, the proof of which is straightforward from Proposition~2.11.
\bigskip

\noindent\textbf{Proposition~2.10.} (corrected)
\begin{enumerate}
\item[(ii)] If $A\in\fO(E)$, then $A$ is quasi-open.
\item[(iii)] If $E\setminus A\in\fO(E)$, then $A$ is quasi-closed.
\end{enumerate}
\medskip

The author does not determine whether all $H$-open sets $A$ in $\fM(E)$ belong to $\fO(E)$ or not.
Although any $H$-open set $A$ is described as $A=\{\sd_H(\cdot,E\setminus A)>0\}$, it is not clear whether $\sd_H(\cdot,E\setminus A)$ is an $\fM(E)$-measurable function in general.
The following are some criteria for $\fO(E)$.
\bigskip

\noindent\textbf{Proposition~A.}
Let $A$ be an $H$-open set in $\fM(E)$.
If either of the following is satisfied in addition, then $A\in\fO(E)$.
\begin{enumerate}
\item $E\setminus A$ is a Suslin set.
\item $A=(A^{H\text{-ext}})^{H\text{-ext}}$.
\end{enumerate}
\begin{proof}
If (i) holds, the function $f(z):=\sd_H(z,E\setminus A)\wedge1$ is shown to be universally measurable as in the proof of Lemma~2.6.
Since $f$ is $H$-Lipschitz and $A=\{f>0\}$, we conclude that $A\in\fO(E)$.
If (ii) holds, $A=\{\sd_H(\cdot,A^{H\text{-ext}})>0\}$.
Since $\sd_H(\cdot,A^{H\text{-ext}})\wedge1$ is $\fM(E)$-measurable and $H$-Lipschitz as in the proof of Proposition~2.10 (i), $A$ belongs to $\fO(E)$.
\end{proof}
Note that if $A$ is an $H$-open and $H$-convex set, then condition~(ii) in the above proposition holds.
Therefore, the proof of Lemma~3.2 is valid by taking the set $O$ in the proof so that $E\setminus O$ is convex in addition.
All other arguments in Section~3 work without modification.
In particular, Theorem~1.2, which is the main theorem of \cite{H11}, remains true.
\bibliographystyle{model1-num-names}

\end{document}